\numberwithin{equation}{section}
\spnewtheorem{thm}{Theorem}{\bf}{\it}
\spnewtheorem{cor}[thm]{Corollary}{\bf}{\it}
\spnewtheorem{lem}[thm]{Lemma}{\bf}{\it}
\spnewtheorem{prop}[thm]{Proposition}{\bf}{\it}
\spnewtheorem{defn}[thm]{Definition}{\bf}{\it}
\spnewtheorem{rem}[thm]{Remark}{\bf}{\it}
\spnewtheorem{ex}[thm]{Example}{\bf}{\it}
\spnewtheorem*{conget}{Conjecture}{\bf}{\it}
\newcommand{\R}{\mathbb{R}}
\newcommand{\FallG}{{\bm{\mathcal{F}}({\Phi},G)}}
\newcommand{\Homeo}{\mathrm{Homeo}}
\newcommand{\p}{\varphi}
\begin{document}




\title{Combining persistent homology and invariance groups for shape comparison}

\dedication{This paper is dedicated to the memory of Marcello D'Orta and Jerry Essan Masslo.}

\author{Patrizio Frosini\and Grzegorz Jab\l o\'nski}
\institute{P. Frosini \at 
	Department of Mathematics and ARCES\\ 
	University of Bologna  Piazza di Porta San Donato 5\\
    40126, Bologna, Italy\\
    \email{patrizio.frosini@unibo.it}
    \and
    G. Jab\l o\'nski \at
    Institute of Computer Science and Computational Mathematics\\
	Jagiellonian University\\
	\L ojasiewicza 6, PL-30-348 Krak\'ow, Poland\\
	\email{grzegorz.jablonski@uj.edu.pl}
}

\maketitle

\begin{abstract}
In many applications concerning the comparison of data expressed by $\R^m$-valued functions defined on a topological space $X$, the invariance with respect to a given group $G$ of self-homeomorphisms of $X$ is required. While persistent homology is quite efficient in the topological and qualitative comparison of this kind of data when the invariance group $G$ is the group $\Homeo(X)$ of all self-homeomorphisms of $X$, this theory is not tailored to manage the case in which $G$ is a proper subgroup of $\Homeo(X)$, and its invariance appears too general for several tasks. This paper proposes a way to adapt persistent homology in order to get invariance just with respect to a given group of self-homeomorphisms of $X$. The main idea consists in a dual approach, based on considering the set of all $G$-invariant non-expanding operators defined on the space of the admissible filtering functions on $X$. Some theoretical results concerning this approach are proven and two experiments are presented. An experiment illustrates the application of the proposed technique to compare 1D-signals, when the invariance is expressed by the group of affinities, the group of orientation-preserving affinities, the group of isometries, the group of translations and the identity group. Another experiment shows how our technique can be used for image comparison.
\keywords{Natural pseudo-distance \and filtering function \and  group action \and persistent homology group \and shape comparison}
\subclass{Primary 55N35 \and Secondary 47H09 54H15 57S10 68U05 65D18}
\end{abstract}




\section{Introduction}
\label{Introduction}
Persistent topology consists in the study of the properties of filtered topological spaces. From the very beginning, it has been applied to shape comparison \cite{Fr91,UrVe97,VeUr96,VeUrFr93}. In this context, data are frequently represented by continuous $\R^m$-valued functions defined on a topological space $X$. As simple examples among many others, these functions can describe the coloring of a 3D object, the coordinates of the points in a planar curve, or the grey-levels in a x-ray CT image. Each continuous function $\p:X\to \R^m$ is called a \emph{filtering function} and naturally induces a (multi)filtration on $X$, made by the sublevel sets of $\p$. Persistent topology allows to analyse the data represented by each filtering function by examining how much the topological properties of its sublevel sets ``persist'' when we go through the filtration. The main mathematical tool to perform this analysis is given by persistent homology \cite{EdHa08}. This theory describes the birth and death of $k$-dimensional holes when we move along the considered filtration of the space $X$. When the filtering function takes its values in $\R$ we can look at it as a time, and the distance between the birthdate and deathdate of a hole is defined to be its \emph{persistence}. The more persistent is a hole, the more important it is for shape comparison, since holes with small persistence are usually due to noise. 

An important property of classical persistent homology consists in the fact that if a self-homeomorphism $g:X\to X$ is given, then the filtering functions $\p,\p\circ g$ cannot be distinguished from each other by computing the persistent homology of the filtrations induced by $\p$ and $\p\circ g$. As pointed out in \cite{ReKoGu11},
this is a relevant issue in the applications where the functions $\p,\p\circ g$ cannot be considered equivalent. This happens, e.g., when each filtering function $\p:X=\R^2\to \R$ describes a grey-level image, since the images respectively described by $\p$ and $\p\circ g$ may have completely different appearances. A simple instance of this problem is illustrated in Figure~\ref{letters}.

 \begin{figure}[htbp]
\begin{center}
\includegraphics[width=5.5cm]{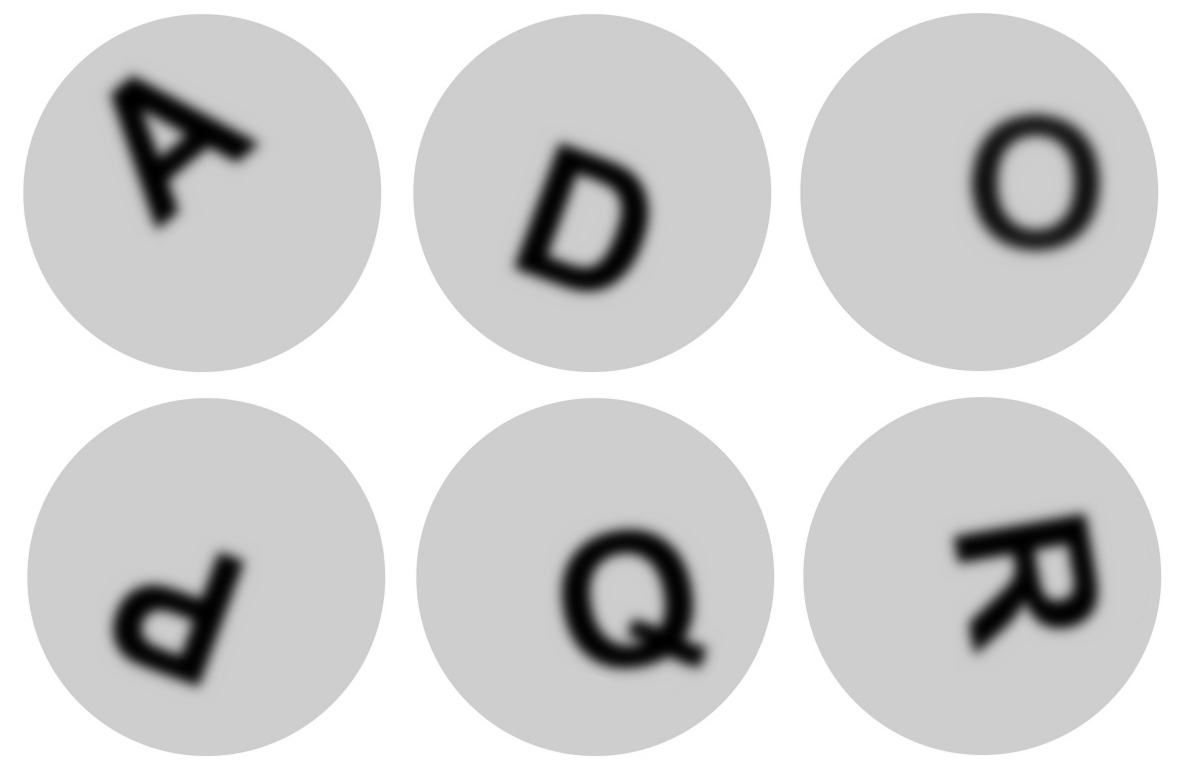}
\caption{Examples of letters $\mathtt{A}, \mathtt{D}, \mathtt{O}, \mathtt{P}, \mathtt{Q}, \mathtt{R}$ represented by functions $\varphi_\mathtt{A},\varphi_\mathtt{D},\varphi_\mathtt{O},\varphi_\mathtt{P},\varphi_\mathtt{Q},\varphi_\mathtt{R}$ from $\R^2$ to the real numbers.  Each function $\varphi_Y:\R^2\to\R$ describes the grey level at each point of the topological space $\R^2$, with reference to the considered instance of the letter $Y$. Black and white correspond to the values $0$ and $1$, respectively (so that light grey corresponds to a value close to $1$). In spite of the differences between the shapes of the considered letters, the persistent homology of the functions $\varphi_\mathtt{A},\varphi_\mathtt{D},\varphi_\mathtt{O},\varphi_\mathtt{P},\varphi_\mathtt{Q},\varphi_\mathtt{R}$ is the same in every degree.}
\label{letters}
\end{center}
\end{figure}

Therefore, a natural question arises: How can we adapt persistent homology in order to prevent invariance with respect to the group $\Homeo(X)$ of all self-homeomorphisms of the topological space $X$, maintaining just the invariance under the action of the self-homeomorphisms that belong to a proper subgroup of $\Homeo(X)$?
For example, the comparison of the letters illustrated in Figure~\ref{letters} should require just the invariance with respect to the group of similarities of $\R^2$, since they all are equivalent with respect to the group $\Homeo(\R^2)$. We point out that depicted letters are constructed from thick lines and therefore have some width in opposite to the concept of geometrical lines.

 One could think of solving the previous problem by using other filtering functions, possibly defined on different topological spaces. For example, we could extract the boundaries of the letters in Figure~\ref{letters} and consider the distance from the center of mass of each boundary as a new filtering function. This approach presents some drawbacks:
\begin{enumerate}
\item It ``forgets'' most of the information contained in the image $\varphi:\R^2\to\R$ that we are considering, confining itself to examine the boundary of the letter represented by $\varphi$. If the boundary is computed by taking a single level of $\varphi$, this is also in contrast with the general spirit of persistent homology. 
\item It usually requires an extra computational cost (e.g., to extract the boundaries of the letters in our previous example).
\item It can produce a different topological space for each new filtering function (e.g., the letters of the alphabet can have non-homeomorphic boundaries). Working with several topological spaces instead of just one can be a disadvantage.
\item It is not clear how we can translate the invariance that we need into the choice of new filtering functions defined on new topological spaces. 
\end{enumerate}  

The purpose of this paper is to present a possible solution for the previously described problem. It is based on a dual approach to the invariance with respect to a subgroup $G$ of $\Homeo(X)$, and consists in changing the direct study of the group $G$ into the study of how the operators that are invariant under the action of $G$ act on classical persistent homology. This change of perspective reveals interesting mathematical properties, allowing to treat $G$ as a variable in our applications. According to this method, the shape properties and the invariance group can be determined separately, depending on our task. The operators that we consider in this paper act on the space of admissible filtering functions and, in some sense, can be interpreted  as the ``glasses'' we use to look at the data. Their use allows to combine persistent homology and the invariance with respect to the group $G$, extending the range of application of classical persistent homology to the cases in which we are interested in $G$-invariance rather than in $\Homeo(X)$-invariance.
\medskip

The idea of applying operators to filtering functions before computing persistent homology has been already considered in previous papers. For example, in \cite{ChEd11} convolutions have been used to get a bound for the norm of persistence diagrams of a diffusing function. Furthermore, in \cite{ReKoGu11} scale space persistence has been shown useful to detect critical points of a function by examining the evolution of their homological persistence values through the scale space.
As for combining persistent homology and transformation groups, the interest in measuring the invariance of a signal with respect to a group of translations (i.e. the study of its periodicity or quasi-periodicity) has been studied in \cite{deSkVe12,PeHa*}, using embedding operators. However, our approach requires to consider just a particular kind of operators (i.e. non-expanding $G$-invariant operators on the set of admissible filtering functions), and faces the more general problem of adapting persistent homology to \emph{any} group of self-homeomorphisms of a topological space.

For another approach to this problem, using quite a different method, we refer the reader to \cite{Fr12}.

\subsection{Our main idea in a nutshell}
After choosing a set $\Phi$ of admissible filtering functions from the topological space $X$ to $\R$, and a subgroup $G$ of $\Homeo(X)$, we consider the set $\FallG$ of all non-expanding $G$-invariant operators $F:\Phi\to \Phi$. Basically, our idea consists in comparing two functions $\p_1,\p_2\in\Phi$ by computing the supremum of the bottleneck distances between the classical persistence diagrams of the filtering functions $F\circ\p_1$ and  $F\circ\p_2$, varying $F$ in $\FallG$. In our paper we prove that this approach is well-defined, $G$-invariant, stable and computable (under suitable assumptions).

\subsection{Outline of the paper}
Our paper is organized as follows. In Section~\ref{setting} we introduce some concepts that will be used in the paper and recall some basic facts about persistent homology. In Section~\ref{main_results} we prove our main results concerning the theoretical properties of our method (Theorems~\ref{stabilityofD}, \ref{maintheoremforG} and~\ref{Fiscompact}). 
In Section~\ref{experiments} we illustrate the application of our technique to an experiment concerning 1D-signals. In Section~\ref{towards} a possible application to image retrieval is outlined. A short discussion concludes the paper.

\section{Mathematical setting}
\label{setting}
Let us consider a (non-empty) triangulable metric space $X$ with nontrivial homology in degree $k$. This last assumption  is always satisfied for $k=0$ and unrestrictive for $k>1$, since we can embed $X$ in a larger triangulable space $Y_k$ with nontrivial homology in degree $k$, and substitute $X$ with $Y_k$. Let $C^0(X,\R)$ be the set of all continuous functions from $X$ to $\R$, endowed with the topology induced by the sup-norm $\|\cdot\|_\infty$.  
Let ${\Phi}$ be a topological subspace of $C^0(X,\R)$, containing at least the set of all constant functions. 
The functions in the topological space ${\Phi}$  will be called \emph{admissible filtering functions on $X$}. 

We assume that a subgroup $G$ of the group $\Homeo(X)$ of all homeomorphisms from $X$ onto $X$ is given, 
acting on the set $\Phi$ by composition on the right (i.e., the action of $g\in G$ takes each function $\p\in \Phi$ to the function $\p\circ g\in \Phi$). 
We do not require $G$ to be a proper subgroup of $\Homeo(X)$, so the equality $G=\Homeo(X)$ can possibly hold. 
It is easy to check that $G$ is a topological group with respect to 
the topology of uniform convergence. Indeed, we can check that if two sequences $(f_i),(g_i)$ converge to $f$ and $g$ in $G$, respectively, then the sequence $(g_i\circ f_i)$ converges to $(g\circ f)$ in $G$. Furthermore, 
if a sequence $(g_i)$ converge to $g$ in $G$, then the sequence $(g^{-1}_i)$ converges to $(g^{-1})$ in $G$.

We also notice that if two sequences $\left(\varphi_{{r}}\right),\left(g_{{r}}\right)$ in $\Phi$ and $G$ are given, converging to $\p$ in $\Phi$ and to $g$ in $G$, respectively, we have that 
$\|\p\circ g-\varphi_{{r}}\circ g_{{r}}\|_\infty\le 
\|\p\circ g-\varphi\circ g_{{r}}\|_\infty+
\|\varphi\circ g_{{r}}-\varphi_{{r}}\circ g_{{r}}\|_\infty$. Since $\left(g_{{r}}\right)$ converges uniformly to $g$ in $G$ and $\p$ is uniformly continuous on the compact space $X$, 
$\lim_{r\to\infty} \|\p\circ g-\varphi\circ g_{{r}}\|_\infty=0$. Moreover, 
$\|\p\circ g_{{r}}-\varphi_{{r}}\circ g_{{r}}\|_\infty=\|\p-\varphi_{{r}}\|_\infty$, 
due to the invariance of the sup-norm under composition of the function inside the norm with homeomorphisms.
Since $\left(\p_{{r}}\right)$ converges uniformly to $\p$ in $\Phi$, 
$\lim_{r\to\infty} \|\p\circ g_r-\varphi_r\circ g_{{r}}\|_\infty=\lim_{r\to\infty} \|\p-\varphi_{{r}}\|_\infty=0$. Hence $\lim_{r\to\infty} \|\p\circ g-\varphi_{{r}}\circ g_{{r}}\|_\infty=0$ and $\lim_{r\to\infty} \varphi_{{r}}\circ g_{{r}}=\p\circ g$.

Therefore, the right action of $G\subseteq\Homeo(X)$ on the set $\Phi$ is continuous. 

If $S$ is a subset of $\Homeo(X)$, the set $\{\p\circ s:\p\in\Phi,s\in S\}$ will be denoted by the symbol $\Phi\circ S$. Obviously, $\Phi\circ G=\Phi$.

We can consider the natural pseudo-distance $d_G$ on the space $\Phi$ (cf. \cite{FrMu99,DoFr04,DoFr07,DoFr09,CaFaLa*}):

\begin{defn}\label{defdG}
The pseudo-distance $d_G:\Phi\times \Phi\to\R$ is defined by setting $$d_G(\p_1,\p_2)=\inf_{g \in G}\max_{x \in X}\left|\p_1(x)-\p_2(g(x))\right|.$$ It is called the \emph{($1$-dimensional) natural pseudo-distance associated with the group $G$ acting on $\Phi$}.
\end{defn}

The term ``$1$-dimensional'' refers to the fact that the filtering functions are real-valued. The concepts considered in this paper can be easily extended to the case of $\R^m$-valued filtering functions, by substituting the absolute value in $\R$ with the max-norm $\|(u_1,\ldots,u_m)\|:=\max_i |u_i|$ in $\R^m$. However, the use of $\R^m$-valued filtering functions would require the introduction of a technical machinery that is beyond the purposes of our research (cf., e.g., \cite{CeDFFe13}), in order to adapt the bottleneck distance to the new setting. Therefore, for the sake of simplicity, in this paper we will just consider the $1$-dimensional case.

We observe that the max-norm distance $d_\infty$ on $\Phi$, defined by setting $d_\infty(\p_1,\p_2):=\|\p_1-\p_2\|_\infty$ is just the natural pseudo-distance $d_G$ in the case that $G$ is the trivial group $Id$, containing only the identity homeomorphism and acting on $\Phi$. Moreover, the definition of $d_G$ immediately implies that if $G_1$ and $G_2$ are subgroups of $\Homeo(X)$ acting on $\Phi$ and $G_1\subseteq G_2$, then $d_{G_2}(\p_1,\p_2)\le d_{G_1}(\p_1,\p_2)$ for every $\p_1,\p_2\in \Phi$. As a consequence, the following double inequality holds, for every subgroup $G$ of $\Homeo(X)$ and every $\p_1,\p_2\in \Phi$ (see also Theorem 5.2 in \cite{CeDFFe13}):
$$
d_{\Homeo(X)}(\p_1,\p_2)\le d_{G}(\p_1,\p_2)\le d_\infty(\p_1,\p_2)
.$$

\begin{rem}\label{weneedagroup}
The proof that $d_G$ is a pseudo-metric \emph{does} use the assumption that $G$ is a group, and 
we can give a simple example of a subset $S$ of $\Homeo(X)$ for which  
the function $\mu_S(\p_1,\p_2):=\inf_{s \in S}\left\|\p_1-\p_2\circ s\right\|_\infty$ is not a pseudo-distance on $\Phi$.
In order to do that, let us set $\Phi:=C^0(S^1,\R)$, and consider the set $S\subseteq \Homeo(S^1)$ containing just the identity $id$ and the counterclockwise rotation $\rho$ of $\pi/2$ radians. Obviously, $S=\{id,\rho\}$ is a subset, but not a subgroup of $\Homeo(S^1)$. 
We have that $\mu_S(\sin \theta,\cos \theta)=0$ (because $\cos\theta=\sin(\rho(\theta))$) and $\mu_S(\cos \theta,-\sin \theta)=0$ (because $-\sin\theta=\cos(\rho(\theta))$), but 
\begin{equation*}
\begin{split}
&\mu_S(\sin \theta,-\sin \theta)=
\min\{\|\sin\theta-(-\sin\theta)\|_\infty,\|\sin\theta-(-\sin(\rho(\theta)))\|_\infty\}=\\
&\|\sin\theta+\cos\theta\|_\infty=\sqrt{2}.
\end{split}
\end{equation*}
Therefore the triangular inequality does not hold, so that $\mu_S$ is not a pseudo-distance on $\Phi$.
\end{rem}

The rationale of using the natural pseudo-distance is that pattern recognition is usually based on comparing properties that are described by functions defined on a topological space. These properties are often the only accessible data, implying that every discrimination should be based on them. The fundamental assumption is that two objects cannot be distinguished if they share the same properties with respect to a given observer (cf. \cite{BiDFFa08}).
\medskip

In order to proceed, we consider the set $\FallG$ of all operators that verify the following properties: 
\begin{enumerate}
\item $F$ is a function from $\Phi$ to $\Phi$;
\item $F(\varphi\circ g)=F(\varphi)\circ g$ for every $\p\in {\Phi}$ and every $g\in G$;
\item $\|F(\varphi_1)-F(\varphi_2)\|_\infty\le \|\varphi_1-\varphi_2\|_\infty$ for every $\p_1,\p_2\in \Phi$ (i.e. $F$ is non-expansive).
\end{enumerate}

Obviously, $\FallG$ is not empty, since it contains at least the identity operator.

Properties 1 and 2 show that $F$ is a $G$-operator, referring to the right action of $G$ on ${\Phi}$.

\begin{rem}\label{nolinear}
The operators that we are considering are not required to be linear. However, due to the non-expansivity property, the operators in $\FallG$ are $1$-Lipschitz and hence are continuous.
\end{rem}


In this paper, we shall say that a pseudo-metric $\bar d$ on $\Phi$ is \emph{strongly $G$-invariant} if it is  invariant under the action of $G$ with respect to each variable, i.e., if $\bar d(\p_1,\p_2)=\bar d(\p_1\circ g,\p_2)=\bar d(\p_1,\p_2\circ g)=\bar d(\p_1\circ g,\p_2\circ g)$ for every $\p_1,\p_2\in {\Phi}$ and every $g\in G$.

\begin{rem}\label{invariant}
It is easily seen that the natural pseudo-distance $d_G$ is strongly $G$-invariant.
\end{rem}

\begin{ex}\label{ex1}
Take $X=S^1$, $G$ equal to the group $R(S^1)$ of all rotations of $S^1$, and 
${\Phi}$ equal to the set 
$C^0(S^1,\R)$ of 
all continuous functions from 
$S^1$ to $\R$. 
As an example of an operator in $\FallG$ we can consider the operator $F_\alpha$ defined by setting $F_\alpha(\p)(x):=\frac{1}{2}\cdot\left(\p(x)+\p(x_\alpha)\right)$ for every $\p\in C^0(S^1,\R)$ and every $x\in S^1$, where $x_\alpha$ denotes the point obtained from $x$ by rotating $S^1$ of a fixed angle $\alpha$. It is easy to check that $F_\alpha$ is a non-expansive $R(S^1)$-invariant (linear) operator defined on $C^0(S^1,\R)$. An example of a non-expansive $R(S^1)$-invariant non-linear operator defined on $C^0(S^1,\R)$ is given by the operator $\bar F$ defined by setting $\bar F(\p)(x)=\p(x)+1$ for every $\p\in C^0(S^1,\R)$ and every $x\in S^1$.
\end{ex}

This simple statement holds (the symbol $\mathbf{0}$ denotes the function taking the value $0$ everywhere):

\begin{prop}\label{propnorm}
$\|F(\p)\|_\infty\le \|\p\|_\infty + \|F(\mathbf{0})\|_\infty$ for every $F\in \FallG$ and every $\p\in \Phi$.
\end{prop}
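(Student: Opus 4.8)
The plan is to reduce the claim to the non-expansivity property (property 3 in the definition of $\FallG$) by means of a single application of the triangle inequality for the sup-norm. The key idea is to introduce the term $F(\mathbf{0})$ artificially: since $\Phi$ is assumed to contain all constant functions, the zero function $\mathbf{0}$ belongs to $\Phi$, and hence $F(\mathbf{0})$ is a well-defined element of $\Phi$ whose norm $\|F(\mathbf{0})\|_\infty$ is finite.

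Concretely, I would first write $F(\p) = \big(F(\p) - F(\mathbf{0})\big) + F(\mathbf{0})$ and apply the triangle inequality, obtaining
\[
\|F(\p)\|_\infty \le \|F(\p) - F(\mathbf{0})\|_\infty + \|F(\mathbf{0})\|_\infty .
\]
Then I would bound the first summand by invoking non-expansivity with the choice $\p_1 = \p$ and $\p_2 = \mathbf{0}$, which gives
\[
\|F(\p) - F(\mathbf{0})\|_\infty \le \|\p - \mathbf{0}\|_\infty = \|\p\|_\infty .
\]
Substituting this bound into the previous inequality yields exactly the asserted estimate $\|F(\p)\|_\infty \le \|\p\|_\infty + \|F(\mathbf{0})\|_\infty$.

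There is essentially no obstacle in this argument: the only point deserving a word of justification is that $\mathbf{0} \in \Phi$, so that $F(\mathbf{0})$ makes sense and the right-hand side is meaningful; this is guaranteed by the standing assumption that $\Phi$ contains the constant functions. The whole proof rests only on the metric structure of $\Phi$ (the triangle inequality for $\|\cdot\|_\infty$) together with the Lipschitz bound with constant $1$ coming from property 3. In particular, no linearity of $F$ is used, in agreement with Remark~\ref{nolinear}, so the statement applies equally to the non-linear operators of $\FallG$.
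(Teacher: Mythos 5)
Your proof is correct and coincides with the paper's own argument: the same decomposition $F(\p)=(F(\p)-F(\mathbf{0}))+F(\mathbf{0})$, the triangle inequality, and non-expansivity applied to the pair $(\p,\mathbf{0})$. The extra remark that $\mathbf{0}\in\Phi$ because $\Phi$ contains the constant functions is a sensible (if implicit in the paper) justification.
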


\begin{proof}\label{proofpropnorm}
$\|F(\p)\|_\infty=
\|F(\p)-F(\mathbf{0})+F(\mathbf{0})\|_\infty\le 
\|F(\p)-F(\mathbf{0})\|_\infty+\|F(\mathbf{0})\|_\infty\le 
\|\p-\mathbf{0}\|_\infty+\|F(\mathbf{0})\|_\infty=
\|\p\|_\infty+\|F(\mathbf{0})\|_\infty$, since $F$ is non-expansive.
\end{proof}

If $\mathcal{F}\neq \emptyset$ is a subset of $\FallG$ and ${\Phi}$ is bounded with respect to $d_\infty$, then we can consider the function 
$$d_{\mathcal{F}}(F_1,F_2):=\sup_{\p\in {\Phi}}\|F_1(\p)-F_2(\p)\|_\infty$$ from $\mathcal{F}\times \mathcal{F}$ to $\R$.


\begin{prop}\label{disadistance}
If $\mathcal{F}$ is a non-empty subset of $\FallG$ and ${\Phi}$ is bounded then the function $d_{\mathcal{F}}$ is a 
distance on $\mathcal{F}$.
\end{prop}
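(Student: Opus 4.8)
The plan is to verify the four defining properties of a distance---finiteness, symmetry, the identity of indiscernibles, and the triangle inequality---each of which reduces to an elementary property of the sup-norm combined with the non-expansivity of the operators in $\mathcal{F}$.

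First I would establish that $d_{\mathcal{F}}(F_1,F_2)$ is actually a (finite) real number, which is the only point that genuinely uses the boundedness hypothesis and which I expect to be the main, though modest, obstacle. Write $\delta:=\sup_{\p_1,\p_2\in\Phi}\|\p_1-\p_2\|_\infty<\infty$ for the diameter of $\Phi$, and fix once and for all a reference function $\p_0\in\Phi$ (for instance a constant function, which $\Phi$ contains by assumption). For an arbitrary $\p\in\Phi$ the triangle inequality for $\|\cdot\|_\infty$ gives
$$\|F_1(\p)-F_2(\p)\|_\infty\le \|F_1(\p)-F_1(\p_0)\|_\infty+\|F_1(\p_0)-F_2(\p_0)\|_\infty+\|F_2(\p_0)-F_2(\p)\|_\infty.$$
By property 3 (non-expansivity) the first and third summands are each bounded by $\|\p-\p_0\|_\infty\le\delta$, while the middle summand is a fixed constant $c:=\|F_1(\p_0)-F_2(\p_0)\|_\infty$, which is finite because $F_1(\p_0)$ and $F_2(\p_0)$ are continuous functions on the compact space $X$. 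Hence $\|F_1(\p)-F_2(\p)\|_\infty\le 2\delta+c$ uniformly in $\p$, so the supremum defining $d_{\mathcal{F}}(F_1,F_2)$ exists in $\R$. Non-negativity is immediate, since it is a supremum of sup-norms.

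The remaining axioms then follow formally. Symmetry holds because $\|F_1(\p)-F_2(\p)\|_\infty=\|F_2(\p)-F_1(\p)\|_\infty$ for every $\p$, so the two defining suprema coincide. For the identity of indiscernibles, $d_{\mathcal{F}}(F_1,F_2)=0$ forces $\|F_1(\p)-F_2(\p)\|_\infty=0$, hence $F_1(\p)=F_2(\p)$, for \emph{every} $\p\in\Phi$, which is precisely the equality $F_1=F_2$ of operators; the converse is trivial. This is exactly the step that yields a genuine distance rather than merely a pseudo-distance. Finally, for the triangle inequality I would fix $\p$ and use $\|F_1(\p)-F_3(\p)\|_\infty\le \|F_1(\p)-F_2(\p)\|_\infty+\|F_2(\p)-F_3(\p)\|_\infty\le d_{\mathcal{F}}(F_1,F_2)+d_{\mathcal{F}}(F_2,F_3)$, and then take the supremum over $\p\in\Phi$ on the left, the right-hand side being independent of $\p$. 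This completes the verification that $d_{\mathcal{F}}$ is a distance on $\mathcal{F}$.
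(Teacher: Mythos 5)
Your proof is correct and follows essentially the same route as the paper's: verify finiteness, symmetry, the triangle inequality, and the identity of indiscernibles one by one, with only the first step using the boundedness of $\Phi$. The sole (immaterial) difference is in that finiteness step: the paper bounds $\|F_1(\p)-F_2(\p)\|_\infty\le\|F_1(\p)\|_\infty+\|F_2(\p)\|_\infty\le 2L$ directly from $F_1(\p),F_2(\p)\in\Phi$ together with the boundedness of $\Phi$, whereas you route the estimate through a reference function and non-expansivity, obtaining $2\delta+c$; both are valid, and yours has the minor virtue of not relying on the operators mapping $\Phi$ into $\Phi$.
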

\begin{proof}
See Appendix~\ref{disadistanceAPP}.
\end{proof}

\begin{rem}\label{supnonmax}
The $\sup$ in the definition of $d_{\mathcal{F}}$ cannot be replaced with $\max$. As an example, consider the case $X=[0,1]$, ${\Phi}=C^0([0,1],[0,1])$, $G$ equal to the group containing just the identity and the homeomorphism taking each point $x\in[0,1]$ to $1-x$, $F_1(\p)$ equal to the constant function taking everywhere the value $\max\p$, and $F_2(\p)$ equal to the constant function taking everywhere the value $\int_0^1\p(x)\ dx$.
Both $F_1$ and $F_2$ are non-expansive $G$-operators.
We have that $d_{\mathcal{F}}(F_1,F_2)=1$, but no function 
$\psi\in \Phi=C^0([0,1],[0,1])$ exists, such that $\|F_1(\psi)-F_2(\psi)\|_\infty=1$. 
To prove this, we firstly observe that 
\begin{equation*}
\begin{split}
&1\ge \max F_1(\p)=\min F_1(\p)=\max\p\ge \max F_2(\p)=\\ 
&\min F_2(\p)=\int_0^1\p(x)\ dx\ge 0
\end{split}
\end{equation*}
 for any $\p\in C^0([0,1],[0,1])$.

Obviously, \begin{equation*}
\begin{split}
d_{\mathcal{F}}(F_1,F_2)=\sup_{\p\in {\Phi}}\left|\max\p-\int_0^1\p(x)\ dx\right|=
\sup_{\p\in {\Phi}}\left(\max\p-\int_0^1\p(x)\ dx\right)\le 1.
\end{split}
\end{equation*}

Let us consider a sequence of continuous functions $\left(\p_i:[0,1]\to [0,1]\right)$, such that $\max\p_i=1$ and $\int_0^1\p_i(x)\ dx\le 1/i$. 
We have that 
\begin{equation*}
\begin{split}
&\|F_1(\p_i)-F_2(\p_i)\|_\infty=\left|\max\p_i-\int_0^1\p_i(x)\ dx\right|=\\
&\max\p_i-\int_0^1\p_i(x)\ dx\ge 1-1/i
\end{split}
\end{equation*}
so that $d_{\mathcal{F}}(F_1,F_2)\ge 1$. Hence $d_{\mathcal{F}}(F_1,F_2)= 1$.

In order to have $\|F_1(\psi)-F_2(\psi)\|_\infty= 1$, the equality 
$\max\psi-\int_0^1\psi(x)\ dx=1$ should hold. 
This is clearly impossible, hence no function $\psi\in {\Phi}$ exists, such that $\|F_1(\psi)-F_2(\psi)\|_\infty=1$. 
\end{rem}

\subsection{Persistent homology}

Before proceeding, we recall some basic definitions and facts in persistent homology. For a more detailed and formal treatment, we refer the interested reader to \cite{EdHa08,BiDFFa08,CaZo09,ChCo*09}.
Roughly speaking, persistent homology describes the changes of the homology groups of the sub-level sets $X_t=\p^{-1}((-\infty,t])$ varying $t$ in $\R$, where $\p$ is a real-valued continuous function defined on a topological space $X$. The parameter $t$ can be seen as an increasing time, whose change produces the birth and death of $k$-dimensional holes in the sub-level set $X_t$. For $k=0,1,2$, the expression ``$k$-dimensional holes'' refers to connected components, tunnels and voids, respectively. The distance between the birthdate and deathdate of a hole is defined to be its \emph{persistence}. The more persistent is a hole, the more important it is for shape comparison, since holes with small persistence are usually due to noise. 

Persistent homology can be introduced in several different settings, including the one of simplicial complexes and simplicial homology, and the one of topological spaces and singular homology. As for the link between the discrete and the topological settings, we refer the interested reader to \cite{CaEtFr13,DFFr13}. In this paper we will consider the topological setting and the singular homology functor $H$.  An elementary introduction to singular homology can be found in \cite{Ha02}. 
\bigskip

The concept of persistence can be formalized by the definition of persistent homology group with respect to the function $\p:X\to\R$:

\begin{defn}\label{defPHG}
If $u,v \in \R$ and $u< v$, we can consider the inclusion $i$ of $X_u$ into $X_v$. Such an inclusion induces a homomorphism
$i^*: H_k\left(X_u\right) \to H_k\left(X_v\right)$ between the homology groups of $X_u$ and $X_v$ in degree $k$.
The group $PH_k^\p(u,v):=i^*\left(H_k\left(X_u\right) \right)$ is called the \emph{$k$-th persistent homology
group with respect to the function $\p:X\to\R$, computed at the point $(u,v)$}.
The rank $r_k({\p})(u,v)$ of this group is said \emph{the $k$-th persistent Betti
number function with respect to the function $\p:X\to\R$, computed at the point $(u,v)$}.
\end{defn}

\begin{rem}\label{remPHG}
It is easy to check that the persistent homology
groups (and hence also the persistent Betti
number functions) are invariant under the action of $\Homeo(X)$. For further discussion see Appendix~\ref{appPHG}.
\end{rem}

A classical way to describe persistent Betti number functions (up to subsets of measure zero of their domain) is given by \emph{persistence diagrams}. Another equivalent description is given by \emph{barcodes} (cf. \cite{CaZo09}). The $k$-th persistence diagram is the set of all pairs $(b_j,d_j)$, where $b_j$ and $d_j$ are the birthdate and the deathdate of the $j$-th $k$-dimensional hole, respectively. When a hole never dies, we set its deathdate equal to $\infty$. For technical reasons, the points $(t,t)$ are added to each persistent diagram. Two persistence diagrams $D_1,D_2$ can be compared by computing the maximum movement of their points that is necessary to change $D_1$ into $D_2$, measured with respect to the maximum norm. This metric naturally induces a pseudo-metric $d_{match}$ on the sets of the persistent Betti number functions. We recall that a pseudo-metric is just a metric without the property assuring that if two points have a null distance then they must coincide. 
For a formal definition of persistence diagram and of the distance (named bottleneck distance) that is used to compare persistence diagrams, we refer the reader to \cite{EdHa08}.
For more details about the existence of pairs of different persistent Betti number functions that are associated with  the same persistent diagram, we refer the interested reader to \cite{CeDFFe13}.

A key property of the distance $d_{match}$ is its stability with respect to $d_\infty$ and $d_{\Homeo(X)}$, stated in the following result.

 \begin{thm}\label{dmatchstability}
If $k$ is a natural number and $\p_1,\p_2\in \Phi=C^0(X,\R)$, then $$d_{match}(r_k(\varphi_1),r_k(\varphi_2))\le d_{\Homeo(X)}(\p_1,\p_2)\le d_\infty(\p_1,\p_2).$$
\end{thm}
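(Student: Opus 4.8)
The plan is to combine two facts that are already available: the homeomorphism-invariance of the persistent Betti number functions recorded in Remark~\ref{remPHG}, and the classical stability of these functions with respect to the sup-norm. The rightmost inequality is immediate: taking $g=\mathrm{id}_X$ in the infimum of Definition~\ref{defdG} gives $d_{\Homeo(X)}(\p_1,\p_2)\le\max_{x\in X}|\p_1(x)-\p_2(x)|=\|\p_1-\p_2\|_\infty=d_\infty(\p_1,\p_2)$, which is in any case a special case of the double inequality already noted above.

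For the leftmost inequality I would start from the classical stability theorem (see \cite{EdHa08}), which in the present notation states that $d_{match}(r_k(\s_1),r_k(\s_2))\le\|\s_1-\s_2\|_\infty$ for all $\s_1,\s_2\in C^0(X,\R)$. I would then invoke Remark~\ref{remPHG}: since the persistent Betti number function is unchanged under precomposition with a self-homeomorphism, $r_k(\p_2\circ g)=r_k(\p_2)$ for every $g\in\Homeo(X)$. Fixing an arbitrary such $g$ and applying stability to the pair $\p_1,\p_2\circ g$, these two observations chain together as
$$d_{match}(r_k(\p_1),r_k(\p_2))=d_{match}(r_k(\p_1),r_k(\p_2\circ g))\le\|\p_1-\p_2\circ g\|_\infty=\max_{x\in X}|\p_1(x)-\p_2(g(x))|.$$

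Since the left-hand side does not depend on $g$, I would finally pass to the infimum over $g\in\Homeo(X)$ on the right, which by Definition~\ref{defdG} is exactly $d_{\Homeo(X)}(\p_1,\p_2)$, completing the proof. The only nontrivial ingredient is the classical estimate $d_{match}\le d_\infty$, which I would cite rather than reprove, so I do not expect a serious obstacle; the one point deserving attention is that both diagrams are computed on the \emph{same} space $X$, which is what makes the invariance of Remark~\ref{remPHG} directly applicable and legitimizes replacing $r_k(\p_2)$ by $r_k(\p_2\circ g)$ before taking the infimum.
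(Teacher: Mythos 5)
Your proposal is correct and follows essentially the same route as the paper, which likewise obtains the right-hand inequality by specializing the infimum to $g=\mathrm{id}_X$ and the left-hand one by combining the classical bottleneck stability theorem (cited from \cite{CoEdHa07} for tame functions and \cite{CeDFFe13} for general continuous ones) with the $\Homeo(X)$-invariance of persistent homology and then passing to the infimum over $g$. The paper merely delegates these steps to the references, so your write-up is simply a more explicit rendering of the same argument.
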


The proof of the inequality $d_{match}(r_k(\varphi_1),r_k(\varphi_2))\le d_\infty(\p_1,\p_2)$ in Theorem~\ref{dmatchstability} can be found in \cite{CoEdHa07} (Main Theorem) for the case of tame filtering functions and in \cite{CeDFFe13} (Theorem 3.13) for the general case of continuous functions. The statement of Theorem~\ref{dmatchstability}  easily follows from the definition of $d_{\Homeo(X)}$ (see  Theorem 5.2 in \cite{CeDFFe13}).
Theorem~\ref{dmatchstability} also shows that the natural pseudo-distance $d_G$ allows to obtain a stability result for persistence diagrams that is better than the classical one, involving $d_\infty$. Figure~\ref{campane} illustrates this fact, displaying two filtering functions 
$\p_1,\p_2:[0,1]\to\R$ such that $d_{match}(r_k(\varphi_1),r_k(\varphi_2)=d_{\Homeo(X)}(\p_1,\p_2)=0< \|\p_1-\p_2\|_\infty=1$.

  \begin{figure}[htbp]
\begin{center}
\includegraphics[width=3cm]{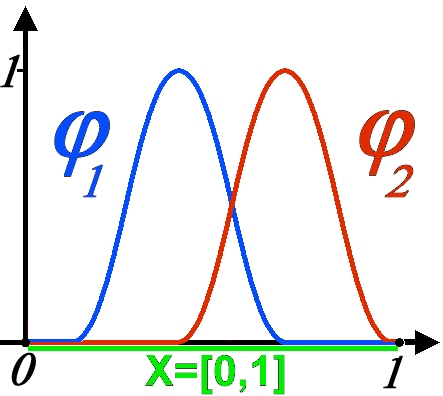}
	\caption{These two functions have the same persistent homology ($d_{match}(r_k(\varphi_1),r_k(\varphi_2)=0$), but $\|\p_1-\p_2\|_\infty=1$. However, they are equivalent with respect to the group $G=\Homeo([0,1])$, hence $d_{\Homeo(X)}(\p_1,\p_2)=0$. As a consequence, $d_{\Homeo(X)}(\p_1,\p_2)$ gives an upper bound of $d_{match}(r_k(\varphi_1),r_k(\varphi_2))$ that is better than the one given by the sup-norm $\|\p_1-\p_2\|_\infty$, via the classical \emph{Bottleneck Stability Theorem} for persistence diagrams (cf. \cite{CoEdHa07}).}
\label{campane}
\end{center}
\end{figure}

\subsection{Strongly $G$-invariant comparison of filtering functions via persistent homology}

Let us fix a non-empty subset $\mathcal{F}$ of $\FallG$.
For every fixed $k$, we can consider the following pseudo-metric $D^{\mathcal{F},k}_{match}$ on $\Phi$:
 $$D^{\mathcal{F},k}_{match}(\varphi_1,\varphi_2):=\sup_{F\in \mathcal{F}} d_{match}(r_k(F(\varphi_1)),r_k(F(\varphi_2)))$$ for every $\varphi_1,\varphi_2\in \Phi$, where $r_k(\p)$ denotes the $k$-th persistent Betti number function with respect to the function $\p:X\to\R$. We will usually omit the index $k$, when its value is clear from the context or not influential. 

 \begin{prop}\label{Dminvariant}
$D^\mathcal{F}_{match}$ is a strongly $G$-invariant pseudo-metric on $\Phi$.
\end{prop}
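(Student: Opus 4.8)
The plan is to deduce every required property of $D^{\mathcal{F}}_{match}$ from the corresponding property of the bottleneck-induced pseudo-metric $d_{match}$, together with the two defining properties of the operators in $\mathcal{F}$ and the $\Homeo(X)$-invariance of persistent Betti numbers recorded in Remark~\ref{remPHG}. Before the axioms, I would first check that $D^{\mathcal{F}}_{match}$ is real-valued, so that the supremum in its definition is finite and the term ``pseudo-metric'' is justified. For any $F\in\mathcal{F}$, Theorem~\ref{dmatchstability} gives $d_{match}(r_k(F(\varphi_1)),r_k(F(\varphi_2)))\le d_\infty(F(\varphi_1),F(\varphi_2))=\|F(\varphi_1)-F(\varphi_2)\|_\infty$, and the non-expansiveness of $F$ (property 3) bounds the latter by $\|\varphi_1-\varphi_2\|_\infty$. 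Hence every term is dominated by $d_\infty(\varphi_1,\varphi_2)<\infty$, so that $D^{\mathcal{F}}_{match}(\varphi_1,\varphi_2)\le d_\infty(\varphi_1,\varphi_2)$ is finite.

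Next I would verify the pseudo-metric axioms by transporting them termwise through the supremum. Non-negativity is immediate since each term is $\ge 0$; reflexivity $D^{\mathcal{F}}_{match}(\varphi,\varphi)=0$ holds because $d_{match}(r_k(F(\varphi)),r_k(F(\varphi)))=0$ for every $F$; symmetry follows from the symmetry of $d_{match}$ applied to each term. For the triangle inequality, I would fix $\varphi_1,\varphi_2,\varphi_3\in\Phi$ and note that, for a single $F$, the triangle inequality for $d_{match}$ bounds $d_{match}(r_k(F(\varphi_1)),r_k(F(\varphi_2)))$ by the sum of the two distances routed through $F(\varphi_3)$, each of which is in turn at most the relevant supremum over $\mathcal{F}$; taking the supremum over $F$ on the left-hand side then yields $D^{\mathcal{F}}_{match}(\varphi_1,\varphi_2)\le D^{\mathcal{F}}_{match}(\varphi_1,\varphi_3)+D^{\mathcal{F}}_{match}(\varphi_3,\varphi_2)$.

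Finally, for strong $G$-invariance I would combine the two ingredients that make the whole dual approach work. For $g\in G$ and $F\in\mathcal{F}$, property 2 gives $F(\varphi_1\circ g)=F(\varphi_1)\circ g$, and since $g\in G\subseteq\Homeo(X)$, Remark~\ref{remPHG} gives $r_k(F(\varphi_1)\circ g)=r_k(F(\varphi_1))$. Therefore each term $d_{match}(r_k(F(\varphi_1\circ g)),r_k(F(\varphi_2)))$ equals $d_{match}(r_k(F(\varphi_1)),r_k(F(\varphi_2)))$, and taking the supremum over $F$ yields $D^{\mathcal{F}}_{match}(\varphi_1\circ g,\varphi_2)=D^{\mathcal{F}}_{match}(\varphi_1,\varphi_2)$. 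The identical argument applied to the second variable, and then to both simultaneously, produces the remaining equalities in the definition of strong $G$-invariance.

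The step requiring care---the ``hard part''---is precisely this last one: one must first use the equivariance of $F$ to move $g$ \emph{outside} the operator, and only then appeal to the homeomorphism-invariance of persistent homology. Neither fact alone suffices, and it is exactly their interaction that yields invariance under $G$ rather than the full invariance under $\Homeo(X)$, which is the whole point of the construction. The only subtlety to keep in mind throughout is that all the functions involved, such as $\varphi_1\circ g$, remain in $\Phi$, which is guaranteed by the assumption that $G$ acts on $\Phi$ by composition on the right.
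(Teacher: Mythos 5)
Your proposal is correct and follows essentially the same route as the paper: bound each term by $\|\varphi_1-\varphi_2\|_\infty$ via Theorem~\ref{dmatchstability} and non-expansivity to see that $D^{\mathcal{F}}_{match}$ is a (finite) supremum of pseudo-metrics, then obtain strong $G$-invariance by first using the $G$-operator property to pull $g$ outside $F$ and then invoking Remark~\ref{remPHG}. The only cosmetic difference is that you verify the pseudo-metric axioms termwise where the paper simply cites that a pointwise-bounded supremum of pseudo-metrics is a pseudo-metric, and you treat the first variable where the paper treats the second and appeals to symmetry.
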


 \begin{proof}
Theorem~\ref{dmatchstability} and the  non-expansivity of every $F\in\mathcal{F}$ imply that 
 \begin{equation*}
\begin{split}
&d_{match}(r_k(F(\varphi_1)),r_k(F(\varphi_2)))\le \\
&\left\|F(\p_1)-F(\p_2)\right\|_\infty \le \\ 
&\left\|\p_1-\p_2\right\|_\infty.
\end{split}
\end{equation*}
Therefore $D^\mathcal{F}_{match}$ is a pseudo-metric, since it is the supremum of a family of pseudo-metrics that are bounded at each pair $(\p_1,\p_2)$.
Moreover, for every $\varphi_1,\varphi_2\in {\Phi}$ and every $g\in G$
\begin{equation*}
\begin{split}
&D^\mathcal{F}_{match}(\varphi_1,\varphi_2\circ g):= \\
&\sup_{F\in \mathcal{F}}  d_{match}(r_k(F(\varphi_1)),r_k(F(\varphi_2\circ g)))=  \\
&\sup_{F\in \mathcal{F}}  d_{match}(r_k(F(\varphi_1)),r_k(F(\varphi_2)\circ g))=  \\
&\sup_{F\in \mathcal{F}}  d_{match}(r_k(F(\varphi_1)),r_k(F(\varphi_2)))= \\
&D^\mathcal{F}_{match}(\varphi_1,\varphi_2)
\end{split}
\end{equation*}
because of Property 2 in the definition of $\FallG$ and the invariance of persistent homology under the action of homeomorphisms (Remark~\ref{remPHG}).
Due to the fact that the function $D^\mathcal{F}_{match}$ is symmetric, this is sufficient to guarantee that $D^\mathcal{F}_{match}$ is strongly $G$-invariant.
\end{proof}

\subsection{Approximating $D^{\mathcal{F}}_{match}$}
\label{approximatingD}

A method to approximate $D^{\mathcal{F}}_{match}$ is given by the next proposition.

\begin{prop}\label{computation}
Assume $\Phi$ bounded.
Let $\mathcal{F}^*=\{F_1,\ldots,F_m\}$ be a finite subset of $\mathcal{F}$. If for every $F\in \mathcal{F}$ 
at least one index $i\in\{1,\ldots,m\}$ exists, such that $d_{\mathcal{F}}(F_i,F)\le\epsilon$, then 
$$\left|D^{\mathcal{F}^*}_{match}(\varphi_1,\varphi_2)-D^\mathcal{F}_{match}(\varphi_1,\varphi_2)\right|\le 2\epsilon$$
for every $\p_1,\p_2\in {\Phi}$.
\end{prop}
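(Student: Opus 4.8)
The plan is to reduce everything to the triangle inequality for the bottleneck-induced pseudo-metric $d_{match}$, combined with the stability estimate of Theorem~\ref{dmatchstability}. First I would dispose of the trivial half of the bound: since $\mathcal{F}^*\subseteq\mathcal{F}$, the supremum defining $D^{\mathcal{F}^*}_{match}(\p_1,\p_2)$ ranges over a smaller index set than the one defining $D^{\mathcal{F}}_{match}(\p_1,\p_2)$, whence $D^{\mathcal{F}^*}_{match}(\p_1,\p_2)\le D^{\mathcal{F}}_{match}(\p_1,\p_2)$ for all $\p_1,\p_2\in\Phi$. Therefore the absolute value in the statement equals $D^{\mathcal{F}}_{match}(\p_1,\p_2)-D^{\mathcal{F}^*}_{match}(\p_1,\p_2)$, and it suffices to bound this difference from above by $2\epsilon$.

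The core step is a three-term triangle inequality with $F_i$ inserted as a ``hub''. Fix arbitrary $\p_1,\p_2\in\Phi$ and an arbitrary $F\in\mathcal{F}$, and use the hypothesis to pick an index $i$ with $d_{\mathcal{F}}(F_i,F)\le\epsilon$. Then
\begin{equation*}
\begin{split}
d_{match}(r_k(F(\p_1)),r_k(F(\p_2)))\le\ &d_{match}(r_k(F(\p_1)),r_k(F_i(\p_1)))\\
&+d_{match}(r_k(F_i(\p_1)),r_k(F_i(\p_2)))\\
&+d_{match}(r_k(F_i(\p_2)),r_k(F(\p_2))).
\end{split}
\end{equation*}
I would control the two outer terms by Theorem~\ref{dmatchstability}, which gives $d_{match}(r_k(F(\p_j)),r_k(F_i(\p_j)))\le\|F(\p_j)-F_i(\p_j)\|_\infty$ for $j=1,2$; and the right-hand side is in turn $\le\sup_{\p\in\Phi}\|F(\p)-F_i(\p)\|_\infty=d_{\mathcal{F}}(F,F_i)\le\epsilon$. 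The middle term is bounded by $D^{\mathcal{F}^*}_{match}(\p_1,\p_2)$ because $F_i\in\mathcal{F}^*$, so it is one of the terms over which that supremum is taken.

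Putting these three estimates together yields $d_{match}(r_k(F(\p_1)),r_k(F(\p_2)))\le D^{\mathcal{F}^*}_{match}(\p_1,\p_2)+2\epsilon$; since the right-hand side does not depend on $F$, taking the supremum over $F\in\mathcal{F}$ gives $D^{\mathcal{F}}_{match}(\p_1,\p_2)\le D^{\mathcal{F}^*}_{match}(\p_1,\p_2)+2\epsilon$, which together with the trivial half completes the argument. I do not anticipate a genuine obstacle here: the only points needing a moment's care are that the pointwise value $\|F(\p_j)-F_i(\p_j)\|_\infty$ is dominated by the operator distance $d_{\mathcal{F}}(F,F_i)$ (immediate from the $\sup$ in its definition), and that passing to the supremum over the possibly infinite family $\mathcal{F}$ is legitimate because the uniform bound is independent of $F$. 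The boundedness hypothesis on $\Phi$ is used only to ensure that $d_{\mathcal{F}}$ is a well-defined finite distance (Proposition~\ref{disadistance}), so that the $\epsilon$-covering condition is meaningful.
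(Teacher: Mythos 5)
Your argument is correct and follows essentially the same route as the paper's proof: both convert the operator-distance bound $d_{\mathcal{F}}(F_i,F)\le\epsilon$ into bottleneck-distance bounds via the stability theorem and then apply the triangle inequality for $d_{match}$ with $F_i$ as the intermediate term. Your write-up merely makes explicit two points the paper leaves implicit, namely the inequality $D^{\mathcal{F}^*}_{match}\le D^{\mathcal{F}}_{match}$ coming from $\mathcal{F}^*\subseteq\mathcal{F}$ and the final passage to the supremum over $F\in\mathcal{F}$.
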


\begin{proof}
Let us assume $F\in \mathcal{F}$ 
and $d_{\mathcal{F}}(F_i,F)\le\epsilon$.
Because of the definition of $d_\mathcal{F}$, for any $\p_1,\p_2\in {\Phi}$ we have that $\|F_i(\p_1)-F(\p_1)\|_\infty\le \epsilon$ and $\|F_i(\p_2)-F(\p_2)\|_\infty\le \epsilon$. Hence
\begin{equation*}
\begin{split}
&d_{match}(r_k(F_i(\varphi_1)),r_k(F(\varphi_1)))\le\epsilon \mbox{\ and\ } d_{match}(r_k(F_i(\varphi_2)),r_k(F(\varphi_2)))\le \epsilon
\end{split}
\end{equation*}
because of the stability of persistent homology (Theorem~\ref{dmatchstability}).
It follows that
\begin{equation*}
\begin{split}
&\left|d_{match}(r_k(F_i(\varphi_1)),r_k(F_i(\varphi_2)))-d_{match}(r_k(F(\varphi_1)),r_k(F(\varphi_2)))\right|\le 2\epsilon.
\end{split}
\end{equation*}

The thesis of our proposition immediately follows from the definitions of $D^\mathcal{F}_{match}$ and $D^{\mathcal{F}^*}_{match}$.
\end{proof}

Therefore, if we can cover $\mathcal{F}$ by a finite set of balls of radius $\epsilon$, centered at points of $\mathcal{F}$, the approximation of $D^\mathcal{F}_{match}(\p_1,\p_2)$ can be reduced to the computation of the maximum of a finite set of bottleneck distances between persistence diagrams, which are well-known to be computable by means of efficient algorithms.

This fact leads us to study the properties of the topological space $\FallG$. 
We will do that in the next section.

\section{Main theoretical results}
\label{main_results}

We start by proving that the pseudo-metric $D^\mathcal{F}_{match}$ is stable with respect to both the natural pseudo-distance associated with the group $G$ and the sup-norm. 

\begin{thm}\label{stabilityofD}
If $\emptyset\neq \mathcal{F}\subseteq\FallG$, 
then  $D^\mathcal{F}_{match}\le d_G \le d_\infty$. 
\end{thm}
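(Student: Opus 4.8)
The plan is to establish the two inequalities separately. The right-hand one, $d_G \le d_\infty$, is immediate and in fact already recorded after Definition~\ref{defdG}: since $G$ is a group it contains the identity homeomorphism, so choosing $g=\mathrm{id}$ in the infimum defining $d_G$ gives $d_G(\p_1,\p_2)\le \max_{x\in X}|\p_1(x)-\p_2(x)| = d_\infty(\p_1,\p_2)$. Thus the substance of the proof lies in the left-hand inequality $D^\mathcal{F}_{match}\le d_G$, which I will prove pointwise on $\Phi\times\Phi$.

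First I would fix arbitrary $\p_1,\p_2\in\Phi$, an operator $F\in\mathcal{F}$, and a homeomorphism $g\in G$, and aim to bound $d_{match}(r_k(F(\p_1)),r_k(F(\p_2)))$ by $\|\p_1-\p_2\circ g\|_\infty$. The crucial idea is to feed the shifted function $\p_2\circ g$ into $F$ and exploit the two available invariances in turn. By Property~2 in the definition of $\FallG$ we have $F(\p_2\circ g)=F(\p_2)\circ g$, and by the invariance of persistent Betti number functions under self-homeomorphisms (Remark~\ref{remPHG}) it follows that $r_k(F(\p_2\circ g))=r_k(F(\p_2)\circ g)=r_k(F(\p_2))$. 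Hence replacing $F(\p_2)$ by $F(\p_2\circ g)$ leaves the matching distance unchanged, so that
$$d_{match}(r_k(F(\p_1)),r_k(F(\p_2)))=d_{match}(r_k(F(\p_1)),r_k(F(\p_2\circ g))).$$
I would then apply the stability theorem (Theorem~\ref{dmatchstability}) to the pair $F(\p_1),F(\p_2\circ g)$, followed by the non-expansivity of $F$ (Property~3), obtaining
$$d_{match}(r_k(F(\p_1)),r_k(F(\p_2\circ g)))\le \|F(\p_1)-F(\p_2\circ g)\|_\infty\le \|\p_1-\p_2\circ g\|_\infty.$$
Combining the last two displays yields, for every $F\in\mathcal{F}$ and every $g\in G$, the per-$F$, per-$g$ bound $d_{match}(r_k(F(\p_1)),r_k(F(\p_2)))\le \|\p_1-\p_2\circ g\|_\infty$.

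The final step is the correct handling of the supremum and infimum. The left-hand side of the last bound does not depend on $g$, so for each fixed $g$ I may take the supremum over $F\in\mathcal{F}$ to get $D^\mathcal{F}_{match}(\p_1,\p_2)\le \|\p_1-\p_2\circ g\|_\infty$; since this now holds for every $g\in G$, taking the infimum over $g$ gives $D^\mathcal{F}_{match}(\p_1,\p_2)\le \inf_{g\in G}\|\p_1-\p_2\circ g\|_\infty = d_G(\p_1,\p_2)$, as desired.

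The only delicate point — which I expect to be the conceptual heart of the argument rather than a genuine technical obstacle — is precisely this interplay of the two invariances together with the order of the $\sup$ and $\inf$. One must push $g$ through $F$ using $G$-equivariance \emph{before} invoking the topological invariance of persistent homology, since it is $F(\p_2)\circ g$, and not $\p_2\circ g$ directly, whose Betti numbers coincide with those of $F(\p_2)$. Once the per-$F$, per-$g$ estimate is in hand, the passage to $\sup_F$ then $\inf_g$ is routine because the quantity being supremized is independent of $g$.
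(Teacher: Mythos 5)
Your proposal is correct and follows essentially the same route as the paper's proof: the per-$F$, per-$g$ chain combining the topological invariance of persistent homology (Remark~\ref{remPHG}), the $G$-equivariance of $F$, the stability theorem, and non-expansivity, followed by $\sup_F$ and then $\inf_g$. The only cosmetic difference is that you run the two invariance steps in the opposite order within the chain of equalities, which is immaterial.
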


\begin{proof}
For every $F\in \FallG$, every $g\in G$ and every $\varphi_1,\varphi_2\in {\Phi}$, we have that 
\begin{equation*}
\begin{split}
& d_{match}(r_k(F(\varphi_1)),r_k(F(\varphi_2)))=  \\
& d_{match}(r_k(F(\varphi_1)),r_k(F(\varphi_2)\circ g))=  \\
& d_{match}(r_k(F(\varphi_1)),r_k(F(\varphi_2\circ g)))\le \\
& \|F(\varphi_1)-F(\varphi_2\circ g)\|_\infty\le \|\varphi_1-\varphi_2\circ g\|_\infty.
\end{split}
\end{equation*}

The first equality follows from the invariance of persistent homology under the action of $\Homeo(X)$ (Remark~\ref{remPHG}), and 
the second equality follows from the fact that $F$ is a $G$-operator.
The first inequality follows from the stability of persistent homology (Theorem~\ref{dmatchstability}), while the second inequality follows from the non-expansivity of $F$.
\bigskip

It follows that, if $\mathcal{F}\subseteq \FallG$, then for every $g\in G$ and every $\varphi_1,\varphi_2\in {\Phi}$ $$D^\mathcal{F}_{match}(\varphi_1,\varphi_2)\le \|\varphi_1-\varphi_2\circ g\|_\infty.$$ 
Hence, 
\begin{equation*}
\begin{split}
& D^\mathcal{F}_{match}(\varphi_1,\varphi_2)
\le \inf_{g \in G}\left\|\p_1-\p_2\circ g\right\|_\infty \le\\
&  \left\|\p_1-\p_2\right\|_\infty = d_{\infty}(\varphi_1,\varphi_2)
\end{split}
\end{equation*}
for every $\varphi_1,\varphi_2\in {\Phi}$. 
\end{proof}

The natural pseudo-distance $d_G$ and the pseudo-distance $D^\mathcal{F}_{match}$ are defined in completely different ways. The former is based on a variational approach involving the set of all homeomorphisms in $G$, while the latter refers only to a comparison of persistent homologies depending on a family of $G$-invariant operators. 
Therefore, the next result may appear unexpected. 


\begin{thm}\label{maintheoremforG}
$D^{\FallG}_{match}=d_G$.
\end{thm}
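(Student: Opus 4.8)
The statement combines two inequalities. The inequality $D^{\FallG}_{match}\le d_G$ is already contained in Theorem~\ref{stabilityofD}, applied with $\mathcal{F}=\FallG$; hence the whole task is to prove the reverse inequality $D^{\FallG}_{match}\ge d_G$.

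To this end, I plan to fix arbitrary $\varphi_1,\varphi_2\in\Phi$ and to exhibit a \emph{single} operator $F\in\FallG$ whose persistence diagrams already realize $d_G(\varphi_1,\varphi_2)$, so that the supremum defining $D^{\FallG}_{match}(\varphi_1,\varphi_2)$ is at least $d_G(\varphi_1,\varphi_2)$. The candidate is the operator $F$ sending each $\psi\in\Phi$ to the constant function of value $d_G(\psi,\varphi_1)$; this lands in $\Phi$ because $\Phi$ contains all constant functions. First I would verify that $F\in\FallG$. Non-expansivity follows from the triangle inequality for the pseudo-metric $d_G$ together with $d_G\le d_\infty$, since $\|F(\psi_1)-F(\psi_2)\|_\infty=|d_G(\psi_1,\varphi_1)-d_G(\psi_2,\varphi_1)|\le d_G(\psi_1,\psi_2)\le\|\psi_1-\psi_2\|_\infty$. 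For $G$-invariance, note that $F(\psi)$ is constant, so $F(\psi)\circ g=F(\psi)$, and it remains to check $d_G(\psi\circ g,\varphi_1)=d_G(\psi,\varphi_1)$; this holds because composing inside the sup-norm with the homeomorphism $g^{-1}$ gives $\|\psi\circ g-\varphi_1\circ h\|_\infty=\|\psi-\varphi_1\circ(h\circ g^{-1})\|_\infty$, and as $h$ ranges over $G$ so does $h\circ g^{-1}$, leaving the infimum unchanged.

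With $F$ in hand, I would compute the two diagrams. By construction $F(\varphi_1)$ is the constant function $\mathbf{0}$ and $F(\varphi_2)$ is the constant function of value $c:=d_G(\varphi_2,\varphi_1)=d_G(\varphi_1,\varphi_2)$, using symmetry of the pseudo-metric $d_G$. Here the standing hypothesis that $X$ has nontrivial homology in degree $k$ becomes essential: the sublevel sets of a constant function equal to $a$ are empty for $t<a$ and equal to $X$ for $t\ge a$, so the $k$-th persistence diagram of such a function consists of $\rank H_k(X)\ge 1$ points at $(a,\infty)$. Consequently the diagram of $F(\varphi_1)$ sits at $(0,\infty)$ and that of $F(\varphi_2)$ at $(c,\infty)$, both with the same multiplicity. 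In the bottleneck distance these essential classes can only be matched to one another (matching a point at infinity to the diagonal would cost infinity), and the cost of the optimal matching is exactly $|0-c|=c$. Thus $d_{match}(r_k(F(\varphi_1)),r_k(F(\varphi_2)))=d_G(\varphi_1,\varphi_2)$, whence $D^{\FallG}_{match}(\varphi_1,\varphi_2)\ge d_G(\varphi_1,\varphi_2)$. Together with Theorem~\ref{stabilityofD} this yields the claimed equality.

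The step I expect to be the main obstacle is the careful bookkeeping of the points at infinity in the bottleneck distance, and making explicit why the nontriviality of $H_k(X)$ cannot be dropped: without at least one essential class the diagrams of the constant functions would be empty, $F$ would detect nothing, and the operator-based pseudo-distance could collapse below $d_G$.
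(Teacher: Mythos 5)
Your proposal is correct and follows essentially the same route as the paper: the paper introduces the operator $F_\psi(\varphi):=$ the constant function of value $d_G(\varphi,\psi)$, verifies non-expansivity and $G$-invariance exactly as you do, observes that the $k$-th diagram of a constant function consists of a single point at $(a,\infty)$ with multiplicity the (non-null) $k$-th Betti number of $X$, and specializes $\psi=\varphi_2$ where you specialize to $\varphi_1$ — an immaterial difference by symmetry of $d_G$. The combination with Theorem~\ref{stabilityofD} for the reverse inequality is also identical.
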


\begin{proof}
For every $\psi\in {\Phi}$ let us consider the operator $F_\psi$ defined by setting $F_\psi(\varphi)$ equal to the constant function taking everywhere the value $d_G(\varphi,\psi)$, for every $\p\in {\Phi}$ (i.e., $F_\psi(\varphi)(x)=d_G(\varphi,\psi)$ for any $x\in X$). 

We observe that
\begin{description}
\item[i)] $F_\psi$ is a $G$-operator on ${\Phi}$, because
the strong invariance of the natural pseu\-do-distance $d_G$
with respect to the group $G$ (Remark~\ref{invariant})
implies that if $\p\in {\Phi}$ and $g\in G$, then $F_\psi(\varphi\circ g)(x)=d_G(\varphi\circ g,\psi)=d_G(\varphi,\psi)=F_\psi(\varphi)(g(x))=\left(F_\psi(\varphi)\circ g\right)(x)$,  for every $x\in X$. 
\item[ii)] $F_\psi$ is non-expansive, because \\
$\|F_\psi(\varphi_1)-F_\psi(\varphi_2)\|_\infty=|d_G(\varphi_1,\psi)-d_G(\varphi_2,\psi)|\le d_G(\varphi_1,\varphi_2)\le \|\varphi_1-\varphi_2\|_\infty$.
\end{description}
Therefore, $F_\psi\in \FallG$.
\bigskip

For every $\varphi_1,\varphi_2,\psi\in {\Phi}$ we have that 
\begin{equation*}
d_{match}(r_k(F_\psi(\varphi_1)),r_k(F_\psi(\varphi_2)))=  
|d_G(\varphi_1,\psi)-d_G(\varphi_2,\psi)|.
\end{equation*}
Indeed, apart from the trivial points on the line $\{(u,v)\in\R^2:u=v\}$, the persistence diagram associated with $r_k(F_\psi(\varphi_1))$ contains only the point $(d_G(\varphi_1,\psi),\infty)$, while the persistence diagram associated with $r_k(F_\psi(\varphi_2))$ contains only the point $(d_G(\varphi_2,\psi),\infty)$. Both the points have the same multiplicity, which equals the (non-null) $k$-th Betti number of $X$.

Setting $\psi=\varphi_2$, we have that
\begin{equation*}
d_{match}(r_k(F_{\varphi_2}(\varphi_1)),r_k(F_{\varphi_2}(\varphi_2)))=  
d_G(\varphi_1,\varphi_2).
\end{equation*}

As a consequence, we have that
\begin{equation*}
D^\mathcal{F}_{match}(\varphi_1,\varphi_2)\ge d_G(\varphi_1,\varphi_2).
\end{equation*}

By applying Theorem~\ref{stabilityofD}, we get $D^\mathcal{F}_{match}(\varphi_1,\varphi_2)=d_G(\varphi_1,\varphi_2)$ for every $\varphi_1,\varphi_2\in {\Phi}$.
\end{proof}


The following two results (Theorem~\ref{Fiscompact} and Corollary~\ref{epsilonnet}) hold, when the metric space $\left({\Phi},d_\infty\right)$ is compact.

\begin{thm}\label{Fiscompact}
If the metric space $\left({\Phi},d_\infty\right)$ is compact, 
\ then also the metric space $\left(\FallG,d_{\FallG}\right)$ is compact.
\end{thm}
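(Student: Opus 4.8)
The plan is to view $\FallG$ as a subset of the space of continuous self-maps of $\Phi$ equipped with the uniform metric, which coincides exactly with $d_{\FallG}$, and then to prove compactness by a direct Arzelà--Ascoli-type argument. Two features make this work: every $F\in\FallG$ is $1$-Lipschitz (Remark~\ref{nolinear}), so $\FallG$ is uniformly equicontinuous, and the target space $\Phi$ is compact, hence complete and separable. Since compactness is equivalent to sequential compactness in metric spaces (and $\Phi$ compact implies $\Phi$ bounded, so $d_{\FallG}$ is a genuine finite-valued metric by Proposition~\ref{disadistance}), it suffices to take an arbitrary sequence $(F_n)$ in $\FallG$ and extract a subsequence converging in $d_{\FallG}$ to some $F\in\FallG$.

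First I would fix a countable dense subset $\{\p_j\}_{j\in\mathbb{N}}$ of $(\Phi,d_\infty)$. For each fixed $j$ the points $F_n(\p_j)$ lie in the compact space $\Phi$, so a diagonal extraction yields a single subsequence, still denoted $(F_n)$, along which $F_n(\p_j)$ converges in $d_\infty$ for every $j$. I would then upgrade this to convergence at every $\p\in\Phi$, uniformly in $\p$: given $\eps>0$, I cover $\Phi$ by finitely many $\eps$-balls centred at points $\p_{j_1},\dots,\p_{j_N}$; using the common $1$-Lipschitz bound $\|F_n(\p)-F_n(\p_{j_i})\|_\infty\le\|\p-\p_{j_i}\|_\infty$ together with the Cauchy property of each of the finitely many sequences $(F_n(\p_{j_i}))_n$, I control $\sup_{\p\in\Phi}\|F_n(\p)-F_m(\p)\|_\infty$ for large $n,m$. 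Thus $(F_n)$ is $d_{\FallG}$-Cauchy, and since for each fixed $\p$ the sequence $(F_n(\p))_n$ is Cauchy in the complete space $\Phi$, I may set $F(\p):=\lim_n F_n(\p)$, the convergence being uniform.

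It then remains to check $F\in\FallG$. The membership $F(\p)\in\Phi$ holds because $\Phi$, being compact inside the Hausdorff space $C^0(X,\R)$, is closed, and $F_n(\p)\to F(\p)$ in $d_\infty$. Non-expansivity passes to the limit from $\|F_n(\p_1)-F_n(\p_2)\|_\infty\le\|\p_1-\p_2\|_\infty$. Finally $G$-invariance survives because right composition with a fixed $g\in G$ is a $d_\infty$-isometry of $C^0(X,\R)$, whence $F(\p\circ g)=\lim_n F_n(\p\circ g)=\lim_n F_n(\p)\circ g=F(\p)\circ g$, where $\p\circ g\in\Phi$ since $\Phi\circ G=\Phi$. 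This shows that the $d_{\FallG}$-limit of the subsequence lies in $\FallG$, completing the sequential-compactness argument. Equivalently, one may phrase the whole thing as: $\FallG$ is equicontinuous and pointwise relatively compact, hence relatively compact in the uniform topology, and closed there by the three limit computations above, hence compact.

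I expect the principal technical step to be the passage from pointwise convergence on the dense set $\{\p_j\}$ to uniform convergence with respect to $d_{\FallG}$, since this is precisely where equicontinuity, the finite subcover of the compact $\Phi$, and the completeness of $\Phi$ must be combined; by comparison, the verifications that $F$ maps into $\Phi$, is non-expansive, and is $G$-invariant are routine once the correct limit operator has been produced.
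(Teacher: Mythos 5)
Your proposal is correct and follows essentially the same route as the paper: a diagonal extraction over a countable dense subset of the compact space $(\Phi,d_\infty)$, an upgrade to uniform convergence via a finite $\epsilon$-net combined with the common $1$-Lipschitz bound, and a verification that the limit operator is non-expansive, $\Phi$-valued, and $G$-invariant. Your treatment of the $G$-invariance step (using that right composition with $g$ is a $d_\infty$-isometry, hence commutes with uniform limits) is a mild streamlining of the paper's double-limit computation, but the argument is the same in substance.
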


\begin{proof}
Since $\Phi$ is bounded, Proposition~\ref{disadistance} guarantees that the distance $d_{\FallG}$ is defined.
Furthermore, $\left(\FallG,d_{\FallG}\right)$ is a metric space, hence it will suffice to prove that it is sequentially compact. Therefore, let us assume that a sequence $(F_i)$ in 
$\FallG$ is given.

Since $\left({\Phi},d_\infty\right)$ is a compact (and hence separable) metric space, we can find a countable and dense subset $\Phi^*=\{\p_j\}_{j\in\mathbb{N}}$ of ${\Phi}$. 
We can extract a subsequence $(F_{i_h})$ from $(F_i)$, such that for every fixed index $j$ the sequence $(F_{i_h}(\p_j))$ converges to a function in $\Phi$ 
with respect to the $\sup$-norm. 
(This follows by recalling that $F_{i}:\Phi\to\Phi$ for every index $i$, with $\left({\Phi},d_\infty\right)$ compact, and by applying a classical diagonalization argument.)


Now, let us consider the operator $\bar F:\Phi\to\Phi$ defined in the following way. 

We define $\bar F$ on $\Phi^*$ by setting $\bar F(\p_j):=\lim_{h\to\infty} \left(F_{i_h}(\p_{j})\right)$ for each $\p_j\in\Phi^*$.

Then we extend $\bar F$ to $\Phi$ as follows. For each $\p\in \Phi$ we choose a sequence $(\p_{j_r})$ in $\Phi^*$, converging to $\p$ in $\Phi$, and set $\bar F(\p):=\lim_{r\to\infty} \bar F(\p_{j_r})$.
We claim that such a limit exists in $\Phi$ and does not depend on the sequence that we have chosen, converging to $\p$ in $\Phi$. In order to prove that the previous limit exists, we observe that for every $r,s\in\mathbb{N}$ 
\begin{equation}
\label{limitexist}
\begin{split}
& \left\|\bar F\left(\varphi_{j_{r}}\right)-\bar F\left(\varphi_{j_{s}}\right)\right\|_\infty=  \\
& \left\|\lim_{h\to\infty} \left(F_{i_h}\left(\varphi_{j_{r}}\right)\right)-\lim_{h\to\infty}\left(F_{i_h}\left(\varphi_{j_{s}}\right)\right)\right\|_\infty=  \\
& \lim_{h\to\infty}\left\| F_{i_h}\left(\varphi_{j_{r}}\right)-F_{i_h}\left(\varphi_{j_{s}}\right)\right\|_\infty\le  \\
& \lim_{h\to\infty} \left\|\varphi_{j_{r}}-\varphi_{j_{s}}\right\|_\infty=  \\
&\left\|\varphi_{j_{r}}-\varphi_{j_{s}}\right\|_\infty
\end{split}
\end{equation}
because each operator $F_{i_h}$ is non-expansive.

Since the sequence $(\varphi_{j_{r}})$ converges to $\p$ in $\Phi$, it follows that $\left(\bar F\left(\varphi_{j_{r}}\right)\right)$ is a Cauchy sequence. The compactness of $\Phi$ implies that $\left(\bar F\left(\varphi_{j_{r}}\right)\right)$ converges in $\Phi$.

If another sequence $(\p_{k_r})$ is given in $\Phi^*$, converging to $\p$ in $\Phi$, then for every index $r$ 
\begin{equation*}
\begin{split}
& \left\|\bar F\left(\varphi_{j_{r}}\right)-\bar F\left(\varphi_{k_{r}}\right)\right\|_\infty \le \left\|\varphi_{j_{r}}-\varphi_{k_{r}}\right\|_\infty
\end{split}
\end{equation*}
and the proof goes as in~(\ref{limitexist}) with $\varphi_{j_{s}}$ replaced by $\varphi_{k_{r}}$.

Since both $(\p_{j_r})$ and $(\p_{k_r})$ converge to $\p$, it follows that $\lim_{r\to\infty}\bar F\left(\varphi_{j_{r}}\right)=\lim_{r\to\infty}\bar F\left(\varphi_{k_{r}}\right)$. Therefore the definition of $\bar F(\p)$ does not depend on the sequence $(\p_{j_r})$ that we have chosen, converging to $\p$.

Now we have to prove that $\bar F\in\FallG$, i.e., that $\bar F$ verifies the three properties 
defining this set of operators.

We have already seen that $\bar F:\Phi\to\Phi$.

For every $\p,\p'\in\Phi$ we can consider two sequences $\left(\varphi_{j_{r}}\right),\left(\varphi_{k_{r}}\right)$ in $\Phi^*$, converging to $\p$ and $\p'$ in $\Phi$, respectively. Due to the fact that the operators $F_{i_h}$ are non-expansive, we have that 
\begin{equation*}
\begin{split}
& \left\|\bar F\left(\varphi\right)-\bar F\left(\varphi'\right)\right\|_\infty=  \\
& \left\|\lim_{r\to\infty} \bar F\left(\varphi_{j_{r}}\right)-
\lim_{r\to\infty} \bar F\left(\varphi_{k_{r}}\right)\right\|_\infty=  \\
& \left\|\lim_{r\to\infty} \lim_{h\to\infty} \left(F_{i_h}\left(\varphi_{j_{r}}\right)\right)-
\lim_{r\to\infty} \lim_{h\to\infty} \left(F_{i_h}\left(\varphi_{k_{r}}\right)\right)\right\|_\infty=  \\
& \lim_{r\to\infty} \lim_{h\to\infty} \left\|F_{i_h}\left(\varphi_{j_{r}}\right)-
F_{i_h}\left(\varphi_{k_{r}}\right)\right\|_\infty\le  \\
& \lim_{r\to\infty}  \lim_{h\to\infty} \left\|\varphi_{j_{r}}-\varphi_{k_{r}}\right\|_\infty=  \\
& \lim_{r\to\infty} \left\|\varphi_{j_{r}}-\varphi_{k_{r}}\right\|_\infty=  \\
&\left\|\varphi-\varphi'\right\|_\infty.
\end{split}
\end{equation*}
Therefore, the operator $\bar F$ is non-expansive. As a consequence, it is also continuous. 

Now we can prove that the sequence $\left(F_{i_h}\right)$ converges to $\bar F$ with respect to $d_{\FallG}$. 
Let us consider an arbitrarily small $\epsilon>0$. Since ${\Phi}$ is compact and $\Phi^*$ is dense in $\Phi$, we can find a finite subset $\{\p_{j_1},\ldots,\p_{j_n}\}$ of $\Phi^*$ such that for each $\p\in {\Phi}$ an index $r\in\{1,\ldots,n\}$ exists, for which $\|\p-\p_{j_r}\|_\infty\le\epsilon$. Since the sequence $\left(F_{i_h}\right)$ converges pointwise to $\bar F$ on the set $\Phi^*$, an index $\bar h$ exists, such that $\|\bar F(\p_{j_r})-F_{i_h}(\p_{j_r})\|_\infty\le\epsilon$ for any $h\ge \bar h$ and any $r\in\{1,\ldots,n\}$.

Therefore, for every $\p\in {\Phi}$ we can find an index $r\in\{1,\ldots,n\}$ such that $\|\p-\p_{j_r}\|_\infty\le\epsilon$ and 
the following inequalities hold for every index $h\ge \bar h$, because of the non-expansivity of $\bar F$ and $F_{i_h}$:
\begin{equation*}
\begin{split}
&\left\|\bar F(\p)-F_{i_h}\left(\varphi\right)\right\|_\infty\le \\
&\left\|\bar F(\p)-\bar F\left(\varphi_{j_r}\right)\right\|_\infty+
\left\|\bar F(\varphi_{j_r})-F_{i_h}\left(\varphi_{j_r}\right)\right\|_\infty+
\left\|F_{i_h}\left(\varphi_{j_r}\right)-F_{i_h}\left(\varphi\right)\right\|_\infty\le \\
&\left\|\p-\varphi_{j_r}\right\|_\infty+
\left\|\bar F(\varphi_{j_r})-F_{i_h}\left(\varphi_{j_r}\right)\right\|_\infty+
\left\|\varphi_{j_r}-\varphi\right\|_\infty\le 3\epsilon.
\end{split}
\end{equation*}
We observe that
$\bar h$ does not depend on $\p$, but only on $\epsilon$ and the set $\{\p_{j_1},\ldots,\p_{j_n}\}$.

It follows that $\left\|\bar F(\p)-F_{i_h}\left(\varphi\right)\right\|_\infty\le 3\epsilon$ for every $\p\in\Phi$ and every $h\ge \bar h$. 

Hence, $\sup_{\p\in\Phi}\left\|\bar F(\p)-F_{i_h}\left(\varphi\right)\right\|_\infty\le 3\epsilon$ for every $h\ge \bar h$.
Therefore, the sequence $\left(F_{i_h}\right)$ converges to $\bar F$ with respect to $d_{\FallG}$. 

The last thing that we have to prove is that $\bar F$ is a $G$-operator. Let us consider a $\p\in\Phi$, a sequence $\left(\varphi_{j_{r}}\right)$ in $\Phi^*$ converging to $\p$ in $\Phi$, and a $g\in G$. Obviously, the sequence $\left(\varphi_{j_{r}}\circ g\right)$ converges to $\p\circ g$ in $\Phi$. We recall that the right action of $G$ on $\Phi$ is continuous, $\bar F$ is continuous and each $F_{i_{h}}$ is a $G$-operator. Hence, given that the sequence $\left(F_{i_h}\right)$ converges to $\bar F$ with respect to $d_{\FallG}$, 
\begin{equation*}
\begin{split}
& \bar F(\p\circ g)=  \\
& \bar F\left(\lim_{r\to\infty} \varphi_{j_{r}}\circ g\right)=  \\
& \lim_{r\to\infty} \bar F\left(\varphi_{j_{r}}\circ g\right)=  \\
& \lim_{r\to\infty} \lim_{h\to\infty} \left(F_{i_{h}}\left(\varphi_{j_{r}}\circ g\right)\right)=  \\
& \lim_{r\to\infty} \lim_{h\to\infty} \left(F_{i_{h}}\left(\varphi_{j_{r}}\right)\circ g\right)=  \\
& \lim_{r\to\infty} \left(\left(\lim_{h\to\infty} \left(F_{i_{h}}\left(\varphi_{j_{r}}\right)\right)\right)\circ g\right)=  \\
& \left(\lim_{r\to\infty} \lim_{h\to\infty} \left(F_{i_{h}}\left(\varphi_{j_{r}}\right)\right)\right)\circ g=  \\
& \left(\lim_{r\to\infty}  \bar F\left(\varphi_{j_{r}}\right)\right)\circ g= \\
& \bar F(\p)\circ g.
\end{split}
\end{equation*}

This proves that $\bar F$ is a $G$-operator.

In conclusion, $\bar F\in\FallG$.

From the fact that the sequence $\left(F_{i_h}\right)$ converges to $\bar F$ with respect to $d_{\FallG}$, it follows that $\left(\FallG,d_{\FallG}\right)$ is sequentially compact.

\end{proof}


\begin{ex}\label{examplecompactC}
As a simple example of a case where the previous Theorem~\ref{Fiscompact} can be applied, we can consider $X=S^1\subset \R^2$, ${\Phi}$ equal to the set of all $1$-Lipschitz functions from $S^1$ to $[0,1]$, and $G$ equal to the topological group of all isometries of $S^1$. The topological space ${\Phi}$ can be easily shown to be compact by applying the Ascoli-Arzel\`a Theorem.
\end{ex}



\begin{cor}\label{epsilonnet}
Assume that the metric space $\left({\Phi},d_\infty\right)$ is compact.
Let $\mathcal{F}$ be a non-empty subset of $\FallG$.
For every $\epsilon >0$, a finite subset $\mathcal{F}^*$ of $\mathcal{F}$ exists, such that $$\left|D^{\mathcal{F}^*}_{match}(\varphi_1,\varphi_2)-D^\mathcal{F}_{match}(\varphi_1,\varphi_2)\right|\le \epsilon$$
for every $\p_1,\p_2\in {\Phi}$.
\end{cor}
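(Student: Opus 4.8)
The plan is to derive this corollary directly from the compactness of $\FallG$ (Theorem~\ref{Fiscompact}) together with the approximation estimate of Proposition~\ref{computation}. First I would record the trivial but necessary observation that, since $\left({\Phi},d_\infty\right)$ is compact, it is in particular bounded; hence by Proposition~\ref{disadistance} the distance $d_{\FallG}$ is well defined and Proposition~\ref{computation} is available on the subset $\mathcal{F}\subseteq\FallG$.

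The heart of the argument is a total-boundedness observation. By Theorem~\ref{Fiscompact} the metric space $\left(\FallG,d_{\FallG}\right)$ is compact, hence totally bounded, and total boundedness is inherited by subsets, so $\mathcal{F}$ is totally bounded as well. Fix $\epsilon>0$. I would then produce a finite $(\epsilon/2)$-net of $\mathcal{F}$ whose \emph{centers lie in $\mathcal{F}$}: applying total boundedness of $\FallG$ with radius $\epsilon/4$ gives finitely many points of $\FallG$ whose $(\epsilon/4)$-balls cover $\mathcal{F}$, and for each such ball that meets $\mathcal{F}$ I choose one representative of the intersection inside $\mathcal{F}$. This yields a finite set $\mathcal{F}^*=\{F_1,\ldots,F_m\}\subseteq\mathcal{F}$ with the property that every $F\in\mathcal{F}$ satisfies $d_{\mathcal{F}}(F_i,F)\le\epsilon/2$ for some $i$: indeed $F$ lies in some covering ball of radius $\epsilon/4$, that ball therefore meets $\mathcal{F}$ and so carries a representative $F_i$, and the triangle inequality gives $d_{\mathcal{F}}(F_i,F)\le \epsilon/4+\epsilon/4=\epsilon/2$.

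Finally I would feed $\mathcal{F}^*$ into Proposition~\ref{computation} with threshold $\epsilon/2$. Since for every $F\in\mathcal{F}$ there is an index $i$ with $d_{\mathcal{F}}(F_i,F)\le\epsilon/2$, the proposition yields $\left|D^{\mathcal{F}^*}_{match}(\varphi_1,\varphi_2)-D^{\mathcal{F}}_{match}(\varphi_1,\varphi_2)\right|\le 2\cdot(\epsilon/2)=\epsilon$ for every $\varphi_1,\varphi_2\in{\Phi}$, which is exactly the asserted estimate.

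The single step deserving care — and the only genuine obstacle — is that Proposition~\ref{computation} requires the centers $F_1,\ldots,F_m$ to belong to $\mathcal{F}$ itself, whereas compactness of $\FallG$ a priori only furnishes a covering by balls centered at points of $\FallG$. This is precisely why I pass to the smaller radius $\epsilon/4$ and then replace each relevant center by an honest element of $\mathcal{F}$ lying in its ball; the resulting $(\epsilon/2)$-net \emph{inside} $\mathcal{F}$ is what makes Proposition~\ref{computation} applicable. Everything else is a routine combination of hereditary total boundedness with the already-established approximation bound.
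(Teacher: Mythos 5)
Your proof is correct and follows essentially the same route as the paper's: both deduce from Theorem~\ref{Fiscompact} that $\mathcal{F}$ admits a finite $(\epsilon/2)$-net with centers in $\mathcal{F}$ itself, and then invoke Proposition~\ref{computation} with threshold $\epsilon/2$. The only difference is cosmetic: the paper obtains the net by covering the (compact) closure $\bar{\mathcal{F}}$ with open $\epsilon/2$-balls centered at points of $\mathcal{F}$ and extracting a finite subcover, whereas you use hereditary total boundedness plus an $\epsilon/4$-to-$\epsilon/2$ re-centering argument — both correctly address the requirement that the centers lie in $\mathcal{F}$.
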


\begin{proof}
Let us consider the closure $\bar{\mathcal{F}}$ of  ${\mathcal{F}}$ in $\FallG$. Let us also consider the covering $\mathcal{U}$ of $\bar{\mathcal{F}}$ obtained by taking all the open $\frac{\epsilon}{2}$-balls centered at points of ${\mathcal{F}}$. Theorem~\ref{Fiscompact} guarantees that 
$\FallG$ is compact, hence also $\bar{\mathcal{F}}$ is compact. Therefore we can extract a finite covering 
$\{B_1,\ldots , B_m\}$ of $\bar{\mathcal{F}}$ from $\mathcal{U}$. We can set $\mathcal{F}^*$ equal to the set of centers of the balls $B_1,\ldots , B_m$. The statement of our corollary immediately follows from Proposition 
~\ref{computation}.
\end{proof}

The previous Corollary~\ref{epsilonnet} shows that, under suitable hypotheses, 
the computation of $D^\mathcal{F}_{match}(\p_1,\p_2)$ can be reduced to the computation of the maximum of a finite set of bottleneck distances between persistence diagrams, for every $\p_1,\p_2\in\Phi$.

\subsection{Comments on the use of $D^\mathcal{F}_{match}$} The goal of this paper is to propose $D^\mathcal{F}_{match}$ as a comparison tool that shares with the natural pseudo-distance $d_G$ the property of being invariant under the action of a given group of homeomorphisms, but is more suitable than $d_G$ for computation and applications. 
As for this subject, two observations are important.
\smallskip

On the one hand, the reader could think of the direct approximation of $d_G$ as a valid alternative to the use of $D^\mathcal{F}_{match}$. 
This approach would lead to consider a \emph{finite} subgroup $H$ of $G$ and to compute 
the pseudo-metric $d_H(\p_1,\p_2)=\min_{h \in H}\left\|\p_1-\p_2\circ h\right\|_\infty$ as an approximation of $d_G$. 
Unfortunately, in many cases we cannot obtain a good approximation of the topological group $G$ by means of a finite subgroup $H$, even if $G$ is compact. As a simple example, we can consider the group $G=SO(3)$ of all orientation-preserving isometries of $\R^3$
that take the point $(0,0,0)$ to itself.
Obviously, $SO(3)$ is a compact topological group with respect to the topology of uniform convergence. It is well known that the only finite subgroups of $SO(3)$ are the ones in the following list~\cite{Ar91}:
\begin{itemize}
\item Cyclic subgroups of order $n$. For $n\ge 3$ they contain the orientation-preserving isometries of $\R^3$ that take a given regular polygon with $n$ vertexes and center of mass $(0,0,0)$
to itself, without changing its orientation. 
If $n=1$ and $n=2$ we have the trivial cyclic subgroup and a cyclic subgroup generated by the rotation of $\pi$ radians around a fixed axis through the point $(0,0,0)$, respectively.
The cardinality of each of these subgroups is $n$.
\item Dihedral subgroups $D_n$ for $n\ge 2$. For $n\ge 3$ they contain the orientation-preserving isometries of $\R^3$ that take a given regular polygon with $n$ vertexes  and center of mass $(0,0,0)$ to itself (possibly changing its orientation). 
If $n=2$ we have the subgroup generated by the rotation of $\pi$ radians around a fixed axis $r$ through the point $(0,0,0)$ and
 the rotation of $\pi$ radians around a fixed axis $s$ through the point $(0,0,0)$, with $s$ orthogonal to $r$.
The cardinality of each of these subgroups is $2n$.             
\item Tetrahedral subgroups (i.e. the groups of all orientation-preserving isometries taking a given regular tetrahedron with center of mass $(0,0,0)$ to itself).
The cardinality of each of these subgroups is $12$.
\item Octahedral subgroups (i.e. the groups of all orientation-preserving isometries taking a given regular octahedron with center of mass $(0,0,0)$ to itself).
The cardinality of each of these subgroups is $24$.
\item Icosahedral subgroups (i.e. the groups of all orientation-preserving isometries taking a given regular icosahedron with center of mass $(0,0,0)$ to itself).
The cardinality of each of these subgroups is $60$.
\end{itemize}


Now, we restrict the homeomorphisms in $SO(3)$ to the $2$-sphere $S^2\subset \R^3$, and endow $S^2$ with the Euclidean metric. With a little abuse of notation, we maintain the symbol $SO(3)$ for this new group of homeomorphisms. The group $SO(3)$ acts on the set $\Phi$ of the $1$-Lipschitz functions from $S^2$ to $\R$,
by composition on the right. ${\Phi}$ is a topological space with respect to the topology induced by the sup-norm. It can be easily shown to be compact by applying the Ascoli-Arzel\`a Theorem.

If $H$ is a cyclic or dihedral subgroup of $SO(3)$ then a unit vector $w_H\in S^2$ exists, such that  
for each homeomorphism $h\in H$ we have $h(w_H)=\pm w_H$.  
In this case, let us take a unit vector $w'_H\in S^2$ 
such that $w'_H\cdot w_H=0$, and consider the functions $\p_H,\psi_H\in\Phi$ defined by setting 
$\p_H(v):=|v\cdot w_H|$ and $\psi_H(v):=|v\cdot w'_H|$ for $v\in S^2$.
We can find a $\rho\in SO(3)$ such that $w'_H=\rho(w_H)$. We have that $\p_H=\psi_H\circ\rho$, and hence
$d_{SO(3)}(\p_H,\psi_H)=0$.
Moreover, $\p\circ \rho'=\p$ for every $\rho'\in H$.
It follows that $d_H(\p_H,\psi_H)=\|\p-\psi\|_\infty$,
that is a positive value, independent of the particular cyclic or dihedral
subgroup $H$ we have considered. 
%

If $H$ is a tetrahedral subgroup of $SO(3)$
then a tetrahedron $T_H$ inscribed to $S^2$ exists, such that  
the homeomorphisms $h\in H$ are the restriction to $S^2$ of the ones that preserve $T_H$ and its orientation.
Let $V_H$ be the set of the vertexes of $T_H$.
In this case, let us choose a face of $T_H$ and take its center of mass $v_H$. Then we consider the counterclockwise rotation $\rho$ of $\alpha:=\frac{2\pi}{6}$ radians around the vector $v_H$.
Finally, let us consider the functions $\p_H,\psi_H\in\Phi$ defined by setting 
$\p_H(v)$ equal to the Euclidean distance of $v$ from the set $V_H$ and $\psi_H:=\p_H\circ \rho$.
The rotation $\rho$ belongs to $SO(3)$, and hence $d_{SO(3)}(\p_H,\psi_H)=0$.
Moreover, each function $\p_H\circ \rho'$ with $\rho'\in H$ vanishes at every point in the set $V_H$,
while $\psi_H$ does not, by construction. It follows that $d_H(\p_H,\psi_H)$ is a positive value, independent of the particular tetrahedral
subgroup $H$ that we have considered. 
If $H$ is an octahedral or icosahedral subgroup of $SO(3)$ we can proceed analogously, substituting the tetrahedron with an octahedron or an icosahedron.

In conclusion, a positive constant $c$ exists such that for every finite subgroup $H$ of $SO(3)$ we can find two functions $\p_H,\psi_H\in\Phi$ with $d_{SO(3)}(\p_H,\psi_H)=0$ and $d_H(\p_H,\psi_H)\ge c$. It follows that the approximation error $\|d_H-d_{SO(3)}\|_\infty$ is greater than a positive constant for any finite subgroup $H$ of $SO(3)$.

We recall that the attempt of approximating $G$ by a set $S$ instead of a group $H$ appears inappropriate, because if $S$ is not a group then the
function $\mu_S(\p_1,\p_2):=\min_{s \in S}\left\|\p_1-\p_2\circ s\right\|_\infty$ is not a pseudo-metric (see Remark~\ref{weneedagroup}). This makes the use of $\mu_{S}$ impractical for data retrieval.

It follows that, in general, the idea of a direct approximation of $d_G$ seems unsuitable for applications.
For the general problem of sampling $SO(3)$, we refer the interested reader to the paper~\cite{NeSh13}.
\smallskip

On the other hand, $D^{\mathcal{F}^*}_{match}$ is always a strongly $G$-invariant pseudo-metric giving a lower bound for $d_G$, for any subset 
$\mathcal{F}^*$ of $\FallG$. Moreover, if $\mathcal{F}^*\subseteq \mathcal{F}$ is an $\epsilon$-approximation  of $\mathcal{F}$
and $\Phi$ is bounded, the pseudo-metric $D^{\mathcal{F}^*}_{match}$ is a $2\epsilon$-ap\-pro\-xi\-ma\-tion of $D^\mathcal{F}_{match}$ (Proposition~\ref{computation}). We have also seen (Corollary~\ref{epsilonnet}) that the existence of an $\epsilon$-approximation $\mathcal{F}^*\subseteq \mathcal{F}$ of any $\mathcal{F}\subseteq\FallG$
is always guaranteed in the case that $\Phi$ 
is compact. 
Therefore, at least in this case, there is no obstruction to obtain a finite set $\mathcal{F}^*$ for which 
the pseudo-metric $D^{\mathcal{F}^*}_{match}$ is an arbitrarily good approximation of $D^\mathcal{F}_{match}$,
contrary to what happens for the pseudo-distance $d_G$. Indeed, we have shown that no finite subgroup $H$ exists for which 
the pseudo-distance $d_H$ is an arbitrarily good approximation of $d_G$, in general.
In other words, $D^{\mathcal{F}}_{match}$ has better properties than $d_G$ with respect to approximation.
Furthermore, the results of the experiments described in Sections~\ref{experiments} and \ref{towards} show that the use of some small family of simple operators may produce a pseudo-metric $D^{\mathcal{F}^*}_{match}$ that is not far from $d_G$ and can be efficiently used for data retrieval, even if $\mathcal{F}^*$ is not a good approximation of $\FallG$.
\smallskip

These observations justify the use of $D^\mathcal{F}_{match}$ in place of $d_G$, for practical purposes.
\smallskip

We also wish to underline the dual nature of our approach. When $G$ becomes ``larger and larger'' the associated family $\FallG$ of non-expansive $G$-invariant operators becomes ``smaller and smaller'', so making the computation of $D^{\FallG}_{match}$ easier and easier, contrarily to what happens for the direct computation of $d_G$. 
In other words, the approach based on $D^{\FallG}_{match}$ seems to be of use exactly when $d_G$ is difficult to compute in a direct way. Moreover, assuming that $\mathcal{F}$ is a finite subset of $\FallG$ and  $H$ is a finite subgroup of $G$, the duality in the definitions of $D^{\FallG}_{match}$ and $d_G$ causes another important difference in the use of $D^{\mathcal{F}}_{match}$ and $d_H$ as respective approximations. It consists in the fact that while $D^{\mathcal{F}}_{match}$ is a \emph{lower} bound for 
$D^{\FallG}_{match}=d_G$, $d_H$ is an \emph{upper} bound for $d_G$. 
As a consequence, if we take the pseudo-metric $d_G$ as the ground truth, the retrieval errors associated with the use of $D^{\mathcal{F}}_{match}$ are just false positive, while the ones associated with the use of $d_H$ are just false negative. 


\begin{rem}\label{purpose}
It is not our present purpose to pursue the approximation of $d_G$ by using $D^\mathcal{F}_{match}$ via 
Theorem~\ref{maintheoremforG}. Indeed, on the one hand that theorem does not say anything about the way of choosing a suitable set of operators. On the other hand it could be that the use of $D^\mathcal{F}_{match}$ to approximate $d_G$ requires a family of operators whose complexity equals the one of directly approximating $d_G$ via brute force. This would not be strange, because the current state of development of research does not allow to estimate $d_G$ from a practical point of view, generally speaking. We highlight that the problem of quickly approximating the natural pseudo-distance is unsolved also in the case of $G$ equal to $\Homeo(X)$, to the best of the authors' knowledge. Moreover, the only result we know concerning the approximation of $d_G$ via persistent homology is limited to filtering functions from $S^1$ to $\R^2$~\cite{FrLa10}, and its relevance is purely theoretical. 

Therefore, our purpose is just to introduce a new and easily computable pseudo-metric that is a lower bound for $d_G$.
Nevertheless, we can make two relevant observations. First of all, the path to the approximation of $d_G$ via $D^\mathcal{F}_{match}$ is not closed, even if it probably requires to develop further ideas. Indeed, Theorem~\ref{Fiscompact} states the compactness of the set of all non-expansive $G$-operators, so laying the groundwork for the study of new approximation schemes. 
Secondly, even if no theoretical approach to the choice of our operators is presently available, it can happen that the use of some small family of simple operators produces a pseudo-metric $D^\mathcal{F}_{match}$ that is not far from $d_G$. We shall devote Section~\ref{experiments} 
to check this possibility in an experiment concerning data represented by functions from $\R$ to $\R$.
\end{rem}

\begin{rem}\label{smaller}
The pseudo-distance $D^\mathcal{F}_{match}$ is based on the set
$\mathcal{F}$. The smallest set $\mathcal{F}$ of non-expansive $G$-operators such that 
$D^\mathcal{F}_{match}$ coincides with the natural pseudo-distance $d_G$ 
 is the one containing just the operator $F_\psi$ defined in the proof of Theorem~\ref{maintheoremforG}. However, this trivial set of operators is completely useless from the point of view of applications, since computing $F_\psi$ for every $\psi\in {\Phi}$ is equivalent to computing the natural pseudo-distance $d_G$. As for the applications to shape comparison, we need the operators in $\mathcal{F}$ to be simple to compute and $\mathcal{F}$ to be small, but still large enough to guarantee that $D^\mathcal{F}_{match}$ is not too far from $d_G$. 
\end{rem}

\section{Experiments}
\label{experiments}
In the previous section we have seen that our approach to shape comparison via non-expansive $G$-operators applied to persistent homology allows to get invariance with respect to arbitrary subgroups $G$ of $\Homeo(X)$. However, some assumptions are required, concerning $G$ and the set $\Phi$ of admissible filtering functions. A natural question arises about what happens in practical applications, when our assumptions are not always guaranteed to hold.
To answer this question, we provide numerical results for some experiments concerning piecewise linear functions. Our experiments may be described as the construction of a dataset that provides functionality to retrieve the most similar functions with respect to a given ``query'' function, after arbitrarily choosing an invariance group $G$. 

The goal of this section is to show that our approximation of the natural pseudo-distance $d_G$ via the use of a finite subset of operators is well behaving. A motivating factor is Corollary~\ref{epsilonnet}, stating that if the set $\Phi$ of admissible filtering functions is compact,
then for every set of operators $\mathcal{F}$ there exists a finite set of operators $\mathcal{F}^*$ such that the pseudo-distance induced by $\mathcal{F}^*$ is $\epsilon$-close to the pseudo-distance induced by the set $\mathcal{F}$, even if it is an infinite set. While it is impractical to use the proof of Corollary~\ref{epsilonnet} to build the finite set $\mathcal{F}^*$, we show that a small subset of $\mathcal{F}$ is sufficient in several applications, both in the compact and in the non-compact case.

Our ground truth for shape comparison is the pseudo-distance $d_G$, approximated by brute force methods (when possible). On the one hand, the approximation of $d_G$ usually has a large computational cost, as we shall see in this section. On the other hand, $D_{match}^{\mathcal{F}}$ allows to get a simple and easy-implementable approximation of $d_G$. This fact justifies our approach. 

In our experiments we have set $\Phi:=C^0(\R, \R)$. We have chosen to work with a dataset $\Phi_{ds}\subset \Phi$ of 20.000 piecewise linear functions $\varphi:\R \to [-1,1]$, with support contained in the closed interval $[0,1]$. In order to obtain the graph of each function we have randomly chosen 
six points $(x_1,y_1), (x_2,y_2),\ldots,(x_6,y_6)$ in the rectangle $[0,1]\times[-1,1]$, with $0<x_1<x_2<\ldots<x_6<1$. The graph of the function on $[0,1]$ is obtained by connecting $(0,0)$ to $(x_1,y_1)$, $(x_1,y_1)$ to $(x_2,y_2)$, $(x_2,y_2)$ to $(x_3,y_3)$, $(x_3,y_3)$ to $(x_4,y_4)$, $(x_4,y_4)$ to $(x_5,y_5)$, $(x_5,y_5)$ to $(x_6,y_6)$, and $(x_6,y_6)$ to $(1,0)$ by segments. Additionally, for computational reasons we require that all functions are Lipschitz, with a given Lipschitz constant $C$, hence each function with a  Lipschitz constant greater than $C$ is filtered out. An example of a randomly generated function is presented in Fig.~\ref{fig:example}.

\begin{figure}[htb]

\begin{minipage}[b]{1.0\linewidth}
  \centering
  \centerline{\includegraphics[width=8.5cm]{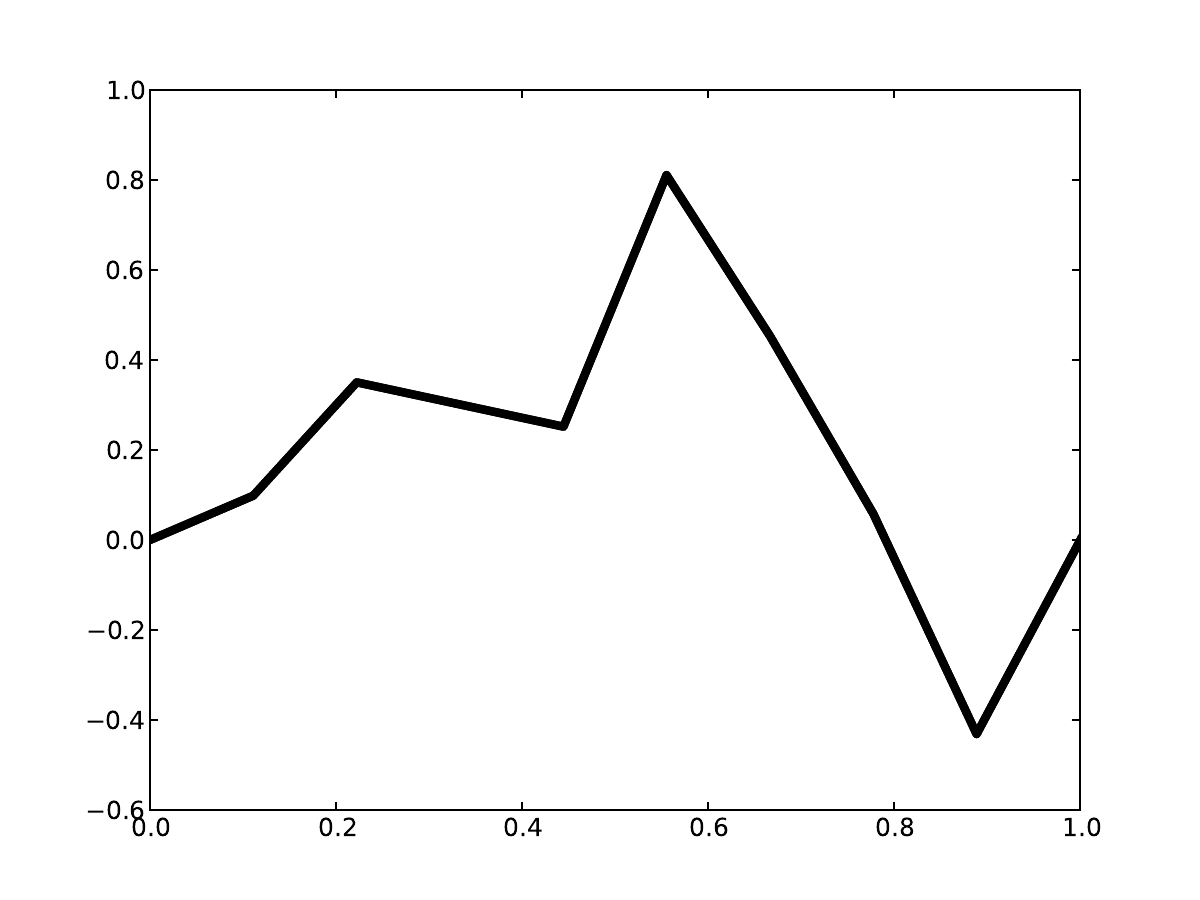}}
\end{minipage}

\caption{One of the functions used in our experiments. The function is zero outside the closed interval $[0,1]$.}
\label{fig:example}
\end{figure}

\paragraph{Invariance groups and operators}
To evaluate the approach described in this paper, we use five invariance groups $G_i$, for $i=1,..,5$. Each group $G_i$  induces a strongly $G_i$-invariant pseudo-metric $d_{G_i}$. Then we define a set $\mathcal{F}_i^*$ of non-expansive $G_i$-operators for each group $G_i$. Here is the list of the groups we have used in our experiments:
\begin{enumerate}
\item $G_1$: the group of all affine transformations from $\R$ to $\R$;
\item $G_2$: the group of all orientation-preserving affine transformations from $\R$ to $\R$;
\item $G_3$: the group of all isometries of $\R$;
\item $G_4$: the group of all translations of $\R$;
\item $G_5$: the trivial group containing just the identity map $id:\R\to\R$.
\end{enumerate}

Let us recall our notations: 
\begin{description}
\setlength{\itemsep}{1pt}
\setlength{\parskip}{0pt}
\setlength{\parsep}{0pt}
\item[-] $d_G(\p,\psi)$ is the infimum of $\|\p-\psi\circ g\|_{\infty}$ for $g\in G$.
\item[-] $D^{\mathcal{F}^*}_{match}(\p,\psi)$ is the supremum of the bottleneck distances between the persistence diagrams of $F(\p)$ and $F(\psi)$, for $F$ varying in the finite set $\mathcal{F}^*$ of non-expansive $G$-operators acting on $\Phi$.
\end{description}

In our experiments we have tested the use of the pseudo-distances $D^{\mathcal{F}_i^*}_{match}$ to decide if two functions in our dataset are similar with respect to the invariance groups $G_i$. This approach avoids the computation of the natural pseudo-distances $d_{G_i}$, which can be  hard to approximate.

\paragraph{Finding the most similar function with respect to the chosen invariance group} After constructing the dataset $\Phi_{ds}$ that we have previously described, we compute the $0$-th persistence diagram of $F(\p)$, for every $\p\in \Phi_{ds}$ and every $F\in \mathcal{F}_i^*$, varying the index $i$. Afterwards, we choose a ``query'' function $\p_q$ in our dataset, which will be compared with all functions in $\Phi_{ds}$. Finally, we compute $D^{\mathcal{F}_i^*}_{match}(\p_q,\p)$ for every $\p\in \Phi_{ds}$ and $i=1,\ldots,5$. In Figures~\ref{fig:results1}--\ref{fig:results5} we show the most similar functions with respect to $D^{\mathcal{F}_1^*}_{match}$, \ldots , $D^{\mathcal{F}_5^*}_{match}$ (i.e. the functions $\p$ minimizing $D^{\mathcal{F}_i^*}_{match}(\p_q,\p)$, with $\p\neq \p_q$).
\medskip

In the next subsections we describe the operators that we have used in our experiments, for each invariance groups $G_i$.

\subsection{Invariance with respect to the group $G_1$ of all affinities of the real line}
\label{G1descr} 
The first group that we consider, denoted by $G_1$, consists of all affinities of the real line (i.e. maps $x\mapsto ax+b$ with $a\neq0$, $b\in\R$). Intuitively, we can squeeze, stretch, horizontally reflect and translate the graph of the function. 

In order to define the non-expansive $G_1$-operators that we will use in this section, we introduce the operator $F_{\hat{w},\hat{c}}$ defined as:
\begin{equation}\label{defFcw}
 F_{\hat{w},\hat{c}}(\p)(x):=\sup_{r\in \R} \sum_{i=1}^n w_i\cdot\p\left(x+rc_i\right),
\end{equation}
 where $\hat{w}$ and $\hat{c}$ are two vectors $\hat{w}:=(w_1,\ldots,w_n)$, $\hat{c}:=(c_1,\ldots,c_n)$ in $\R^{n}$. If we set $\sum_{i=1}^n |w_i|=1$, then we can easily check that $F_{\hat{w},\hat{c}}$ is a non-expansive $G_1$-operator. 
From the computational point of view, the operator $F_{\hat{w},\hat{c}}$ can be approximated by substituting the supremum in its definition with a maximum for $r$ belonging to a finite set.
 
In order to apply our method to approximate $d_{G_1}$, we will consider the set $\mathcal{F}_1^*$, 
consisting of the following non-expansive $G_1$-operators:
\begin{itemize}
\item[-] $F_1^a$, defined by setting $F_1^a(\p)(x)= \p(x)$ for every $\p\in \Phi$ and every $x\in \R$;
\item[-] $F_1^b$, defined by setting $F_1^b(\p)(x)= -\p(x)$  for every $\p\in \Phi$ and every $x\in \R$;
\item[-] $F_1^c := F_{\hat{w},\hat{c}}$, where $\hat{w}=(0.3,0.4,0.3)$ and $\hat{c}=(0.3,0.6,0.9)$;
\item[-] $F_1^d := F_{\hat{w},\hat{c}}$, where $\hat{w}=(-0.3,0.4,-0.3)$ and $\hat{c}=(0.3,0.6,0.9)$;
\item[-] $F_1^e := F_{\hat{w},\hat{c}}$, where \\$\hat{w}=(-0.2,0.2,-0.2,0.2,-0.2)$ and $\hat{c}=(0.2,0.4,0.6,0.8,1.0)$.
\end{itemize}
In Fig.~\ref{fig:results1} we show an example of retrieval in our dataset. The two functions (solid black lines) that are most similar to a given query function (dotted blue line) are displayed. We also show the alignments of the retrieved functions 
to the query function.
These alignments have been obtained via brute force computation, by approximating every affine transformation in $G_1$.
One can notice that 
if we restrict ourselves to consider a finite set $S_1$ of affinities from $\R$ to $\R$, the inclusion $S_1\subset G_1$ implies that for every $\p_1,\p_2$ in our dataset
$$D^{\mathcal{F}_1^*}_{match}(\varphi_1,\varphi_2)\le d_{G_1}(\p_1,\p_2)\le \min_{g\in S_1} \|\p_1-\p_2\circ g\|_\infty.$$
This is due to the stability of $D^{\mathcal{F}}_{match}$ with respect to the natural pseudo-distance $d_{G}$ associated with the group $G$ (Theorem~\ref{stabilityofD}), and to the definition of natural pseudo-distance. 
It follows that if $$\left|D^{\mathcal{F}_1^*}_{match}(\varphi_1,\varphi_2)-\min_{g\in S_1} \|\p_1-\p_2\circ g\|_\infty\right|\le \epsilon$$ then
$$\left|D^{\mathcal{F}_1^*}_{match}(\varphi_1,\varphi_2)-d_{G_1}(\p_1,\p_2)\right|\le \epsilon.$$
This inequality suggests a method to evaluate the approximation of $d_{G_1}$ that we obtain by means of $D^{\mathcal{F}_1^*}_{match}$, via an estimate of the value $\min_{g\in S_1} \|\p_1-\p_2\circ g\|_\infty$.
We choose $c:=10C$ and discretize the domains for $a$ and $b$, by considering two sets
$\{a_1,\ldots, a_{r_a}\}$ and $\{b_1,\ldots, b_{r_b}\}$, with $1/c\le |a_i|\le c$ and $-c\le b_j\le c$ for $1\le i\le r_a$ and $1\le j\le r_b$. Then we compute 
$\min_{i,j} \|\p_1-\p_2\circ g_{ij}\|_\infty$, where $g_{ij}(x):=a_ix+b_j$. 
In practice, we set $C=5$ and discretize the intervals $[\frac{1}{50}, 50]$ and $[-50,50]$ by choosing equidistant points with the distance between neighboring points equal to $0.01$. 
This approach requires the computation of the sup-distance $d_\infty$ between $\p_1$ and $\p_2\circ g_{ij}$ for $r_a r_b$ functions $g_{ij}$.
The overall computation of $\min_{i,j} \|\p_1-\p_2\circ g_{ij}\|_\infty$ is painstakingly slow, and we performed it just to find an upper bound for the distance between $D^{\mathcal{F}_1^*}_{match}$ and $d_{G_1}$, in order to evaluate our method. Actually, the purpose of our approach is to avoid the computation of $\min_{i,j} \|\p_1-\p_2\circ g_{ij}\|_\infty$ and $d_{G_1}$, and substituting $d_{G_1}$ with $D^{\mathcal{F}_1^*}_{match}$.


\begin{figure}[h!]
\begin{minipage}[b]{\linewidth}
  \centering
  \centerline{\includegraphics[width=6.0cm]{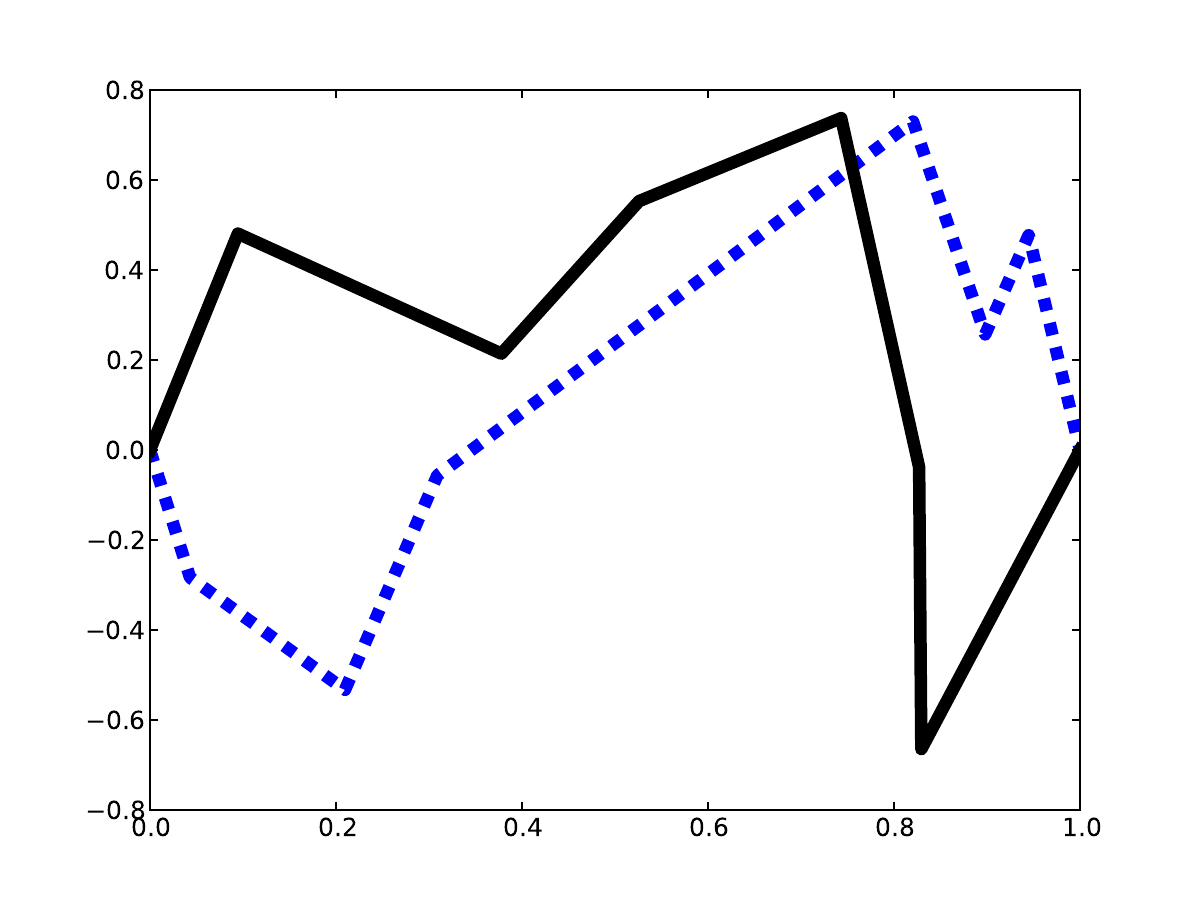}
  \includegraphics[width=6.0cm]{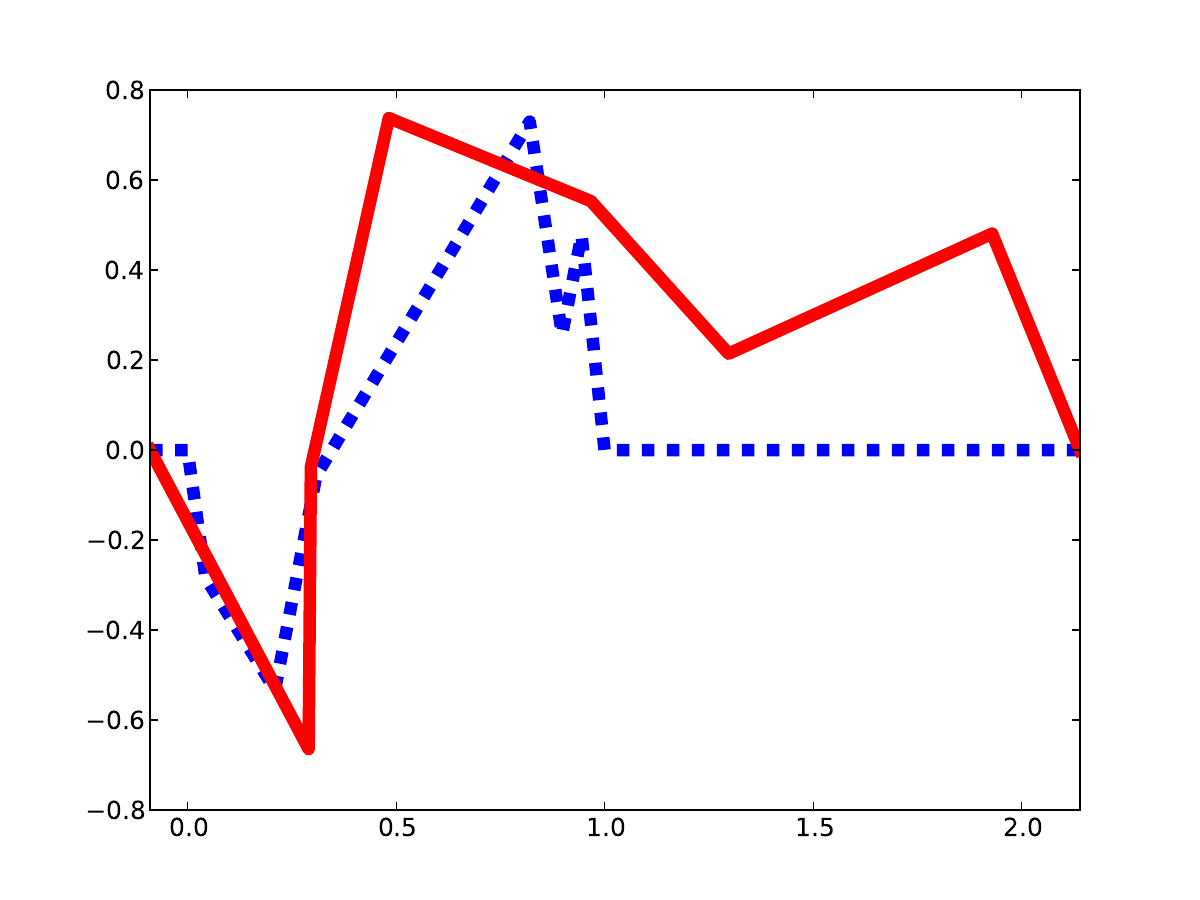}}
\medskip
   \centerline{\includegraphics[width=6.0cm]{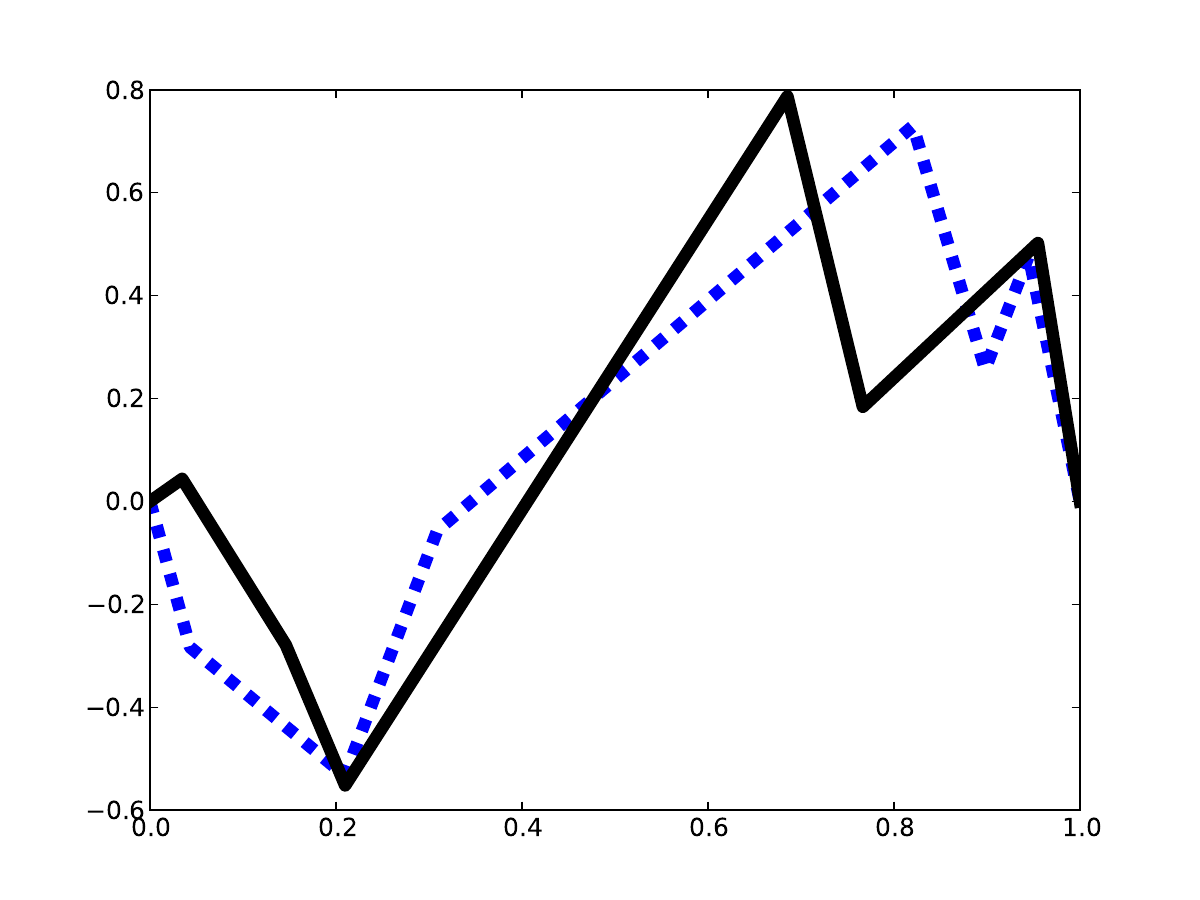}
  \includegraphics[width=6.0cm]{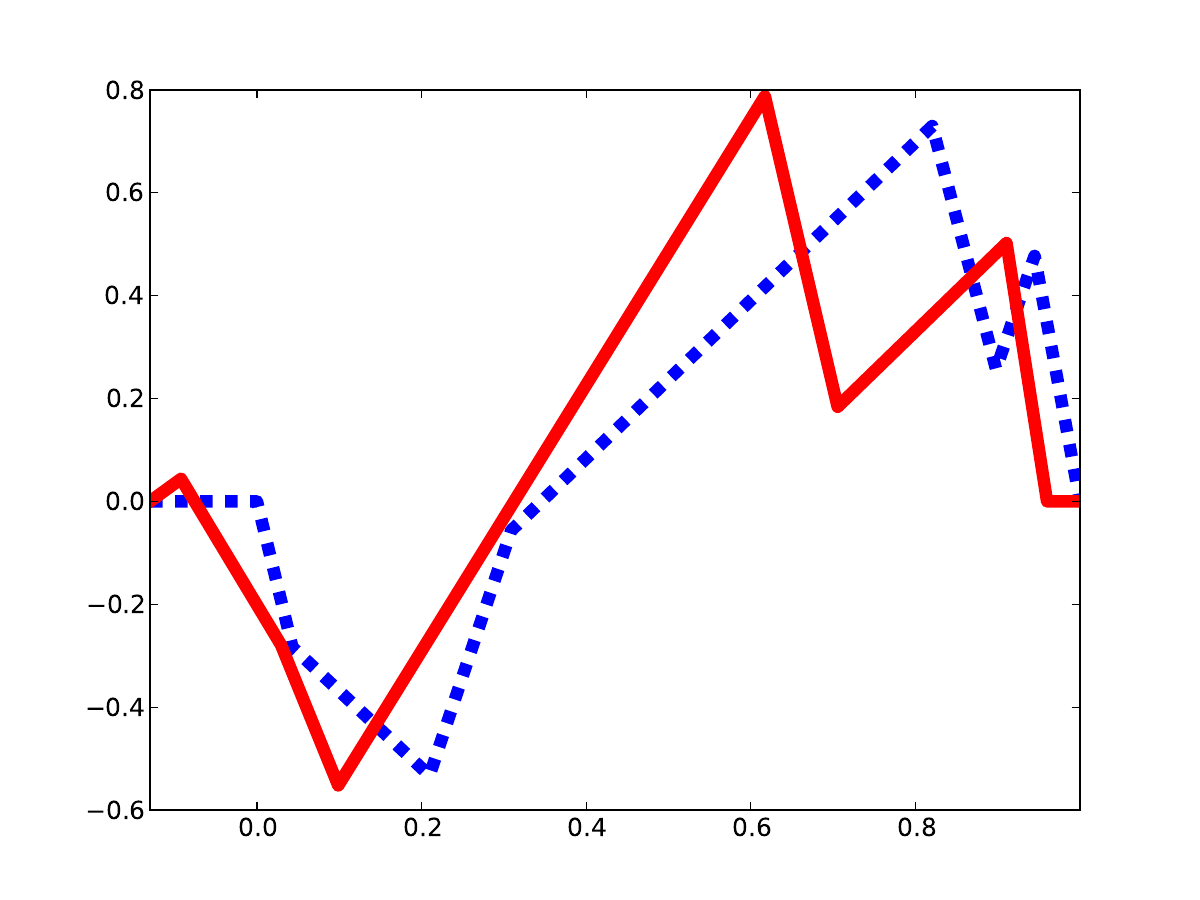}}
    \caption{Output of the experiment concerning the invariance group $G_1$: the most similar (solid black line, left upper plot) and the second most similar function (solid black line, left lower plot) with respect to the query function (dotted blue line) are displayed. On the right side the results of alignment of the retrieved function to the query function are displayed (solid red lines).
These alignments are obtained via brute force computation, by approximating the affine transformations in $G_1$. They are added to allow a visual and qualitative comparison.}
  \label{fig:results1}

\end{minipage}
\end{figure}

We encourage the readers to analyze the presented results. We point out that in Fig.~\ref{fig:results1} the graphs of  the retrieved functions (solid black lines) are similar to the graph of the ``query'' function (dotted blue line), with respect to the group $G_1$. The red graphs show how we can get good alignments of the retrieved functions to the query function by applying affine transformations.

\subsection{Invariance with respect to the group $G_2$ of all orientation-preserving affinities of the real line}
The second group that we consider, denoted by $G_2$, consists of all affinities of the real line that preserve the orientation (i.e. maps $x\mapsto ax+b$ with $a>0$ and $b\in\R$). This group is smaller than $G_1$ - one cannot use reflections to align the functions. With reference to the operator $F_{\hat{w},\hat{c}}$ defined in~(\ref{defFcw}),
it is easy to check that after changing the condition $r\in\R$ to $r>0$ in~(\ref{defFcw}), the operator is invariant under the group of all orientation-preserving affinities. Let us denote this new operator as $\bar{F}_{\hat{w},\hat{c}}$.  With reference to the other operators used in previous subsection~\ref{G1descr},  
we know that $F_1^a$ and $F_1^b$ are non-expansive $G_2$-operators. As a consequence we use them also for the invariance group $G_2$, adding the operators $F_2^a$, $F_2^b$, $F_2^c$, $F_2^d$ and $F_2^e$ defined as follows:
\begin{itemize}
\item[-] $F_2^a := \bar{F}_{\hat{w},\hat{c}}$, where $\hat{w}=(0.3,0.4,0.3)$ and $\hat{c}=(0.3,0.6,0.9)$;
\item[-] $F_2^b := \bar{F}_{\hat{w},\hat{c}}$, where $\hat{w}=(-0.3,0.4,-0.3)$ and $\hat{c}=(0.3,0.6,0.9)$;
\item[-] $F_2^c := \bar{F}_{\hat{w},\hat{c}}$, where\\ $\hat{w}=(-0.2,0.2,-0.2,0.2,-0.2)$ and $\hat{c}=(0.2,0.4,0.6,0.8,1.0)$;
\item[-] $F_2^d := \bar{F}_{\hat{w},\hat{c}}$, where\\ $\hat{w}=(0.2,-0.2,0.2,-0.2,0.2)$ and $\hat{c}=(0.2,0.4,0.6,0.8,1.0)$;
\item[-] $F_2^e := \bar{F}_{\hat{w},\hat{c}}$, where\\ $\hat{w}=(-0.1,0.2,-0.4,0.2,-0.1)$ and $\hat{c}=(0.2,0.4,0.6,0.8,1.0)$.
\end{itemize}
In plain words, the group $G_2$ does not allow reflections, but only squeezing/stretching and translations. 
In Fig.~\ref{fig:results2} we show an example of retrieval in our dataset. The two functions (solid black lines) that are most similar to a given query function (dotted blue line) are displayed. 
The red graphs show how we can get good alignments of the retrieved functions to the query function by applying orientation-preserving affine transformations. These alignments have been obtained by approximating the transformations in $G_2$.
%

\begin{figure}[htb]
\begin{minipage}[b]{\linewidth}
  \centering
  \centerline{\includegraphics[width=6.0cm]{plot_6043.pdf}
  \includegraphics[width=6.0cm]{plot_6043_shifted.pdf}}
\medskip
   \centerline{\includegraphics[width=6.0cm]{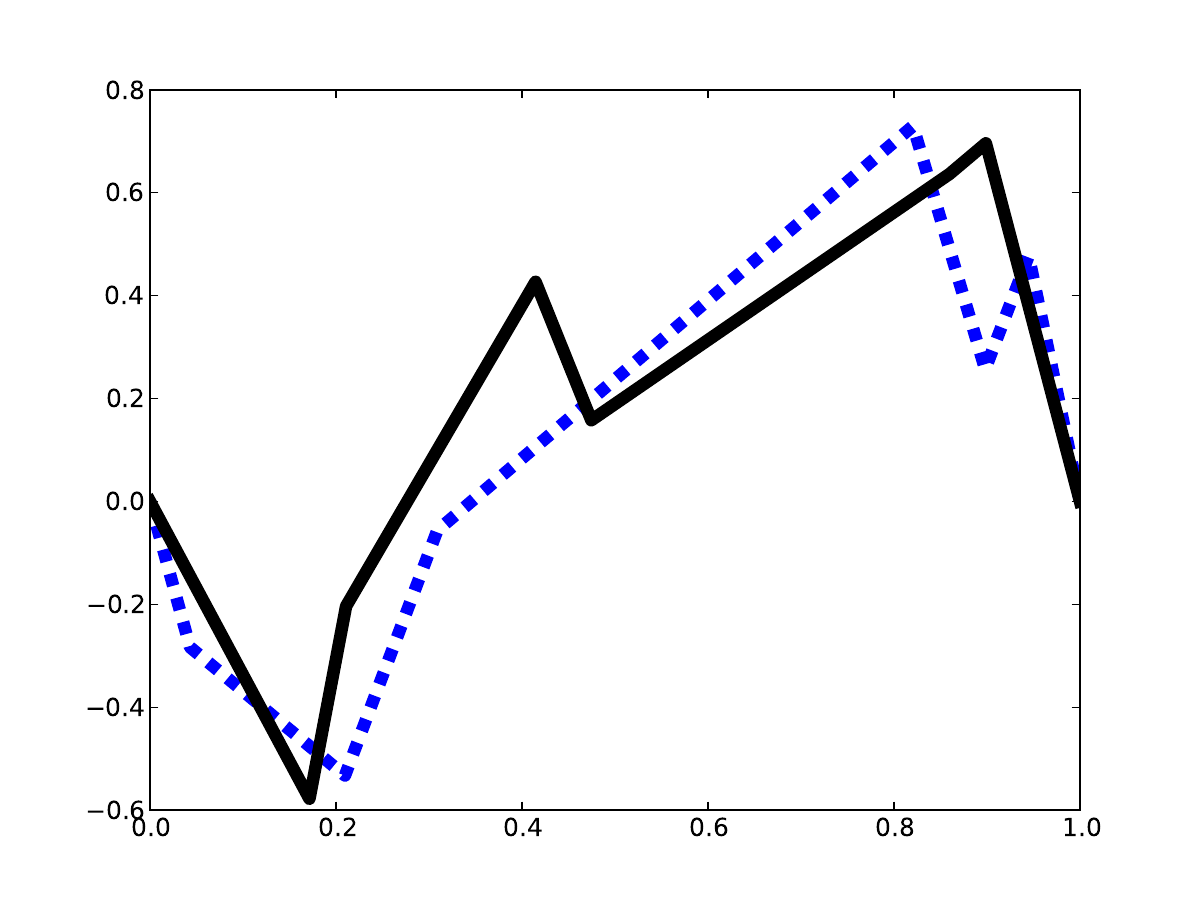}
  \includegraphics[width=6.0cm]{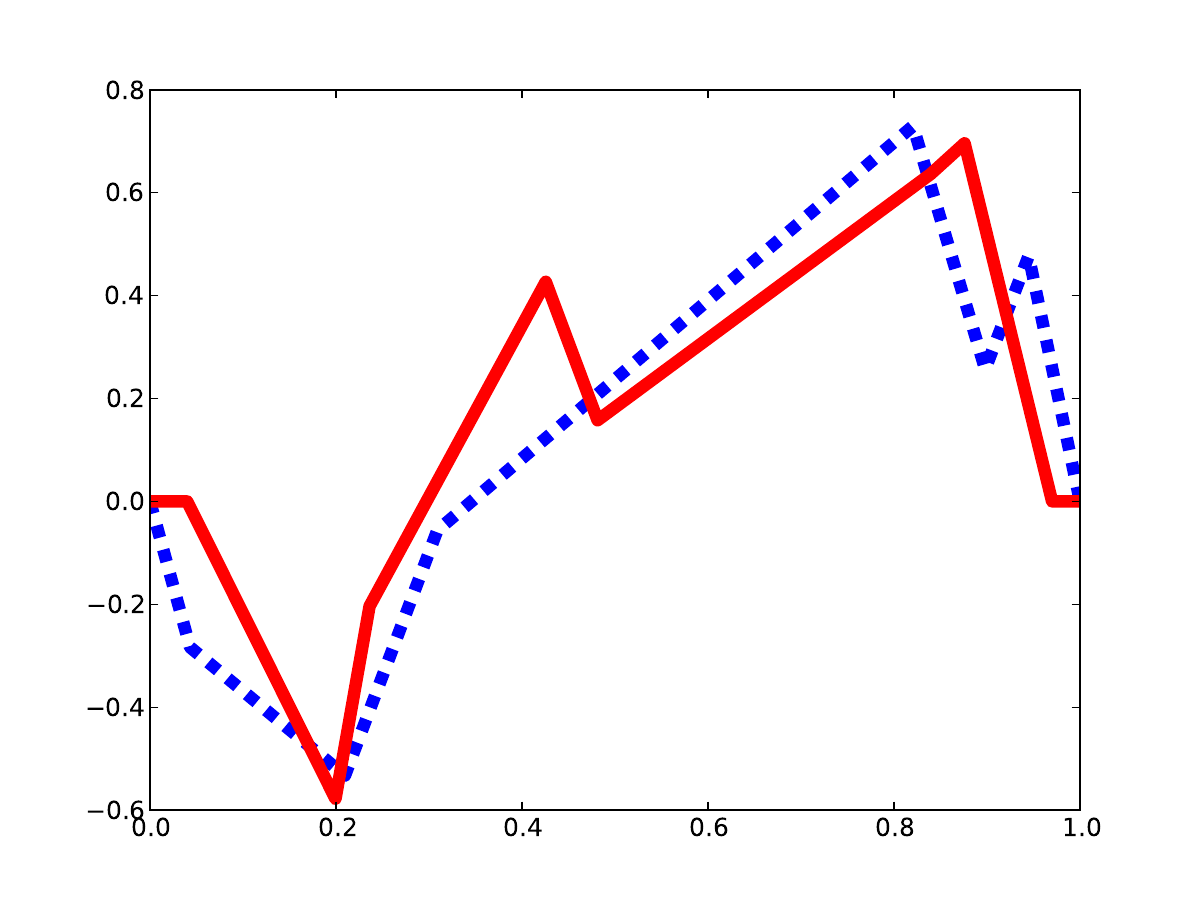}}
    \caption{Output of the experiment concerning the invariance group $G_2$. Color and type of plots are the same as in Fig.~\ref{fig:results1}.}
  \label{fig:results2}

\end{minipage}
\end{figure}

\subsection{Invariance with respect to the group of all isometries of the real line}
The third group that we consider, denoted by $G_3$, consists of all isometries of the real line (i.e. maps $x\mapsto ax+b$ with $a=\pm 1$ and $b\in\R$). 

Since $G_3\subseteq G_1$, the operators that we have used for comparison with respect to the group $G_1$ are also $G_3$-operators. As a consequence we can use them also for the invariance group $G_3$, adding the operators $F_3^a$, $F_3^b$, $F_3^c$, $F_3^d$ and $F_3^e$ defined as follows:
\begin{itemize}
\item[-] $F_3^a$, defined by setting $F_3^a(\varphi)(x) = \max\left(\p\left(x-\frac{1}{4}\right),\p(x),\p\left(x+\frac{1}{4}\right)\right)$ for every $\p\in \Phi$ and every $x\in \R$;
\item[-] $F_3^b$, defined by setting $F_3^b(\varphi)(x) = \frac{1}{3}\left(\p\left(x-\frac{1}{4}\right)+\p(x)+\p\left(x+\frac{1}{4}\right)\right)$ for every $\p\in \Phi$ and every $x\in \R$;
\item[-] $F_3^c$, defined by setting $F_3^c(\varphi)(x) = \frac{1}{3}\left(\p\left(x-\frac{1}{3}\right)+\p(x)+\p\left(x+\frac{1}{3}\right)\right)$ for every $\p\in \Phi$ and every $x\in \R$;
\item[-] $F_3^d$, defined by setting \\ $F_3^d(\varphi)(x) = \frac{1}{5}\left(\p\left(x-\frac{1}{3}\right)+\p\left(x-\frac{1}{4}\right)+\p(x)+\p\left(x+\frac{1}{4}\right)+\p\left(x+\frac{1}{3}\right)\right)$ for every $\p\in \Phi$ and every $x\in \R$;
\item[-] $F_3^e$, defined by setting \\ $F_3^e(\varphi)(x) = \max\left(\p\left(x-\frac{1}{3}\right),\p\left(x-\frac{1}{4}\right),\p(x),\p\left(x+\frac{1}{4}\right),\p\left(x+\frac{1}{3}\right)\right)$ for every $\p\in \Phi$ and every $x\in \R$.
\end{itemize}


In Fig.~\ref{fig:results3} we show an example of retrieval in our dataset. The two functions (solid black lines) that are most similar to a given query function (dotted blue line) are displayed. The red graphs show how we can get good alignments of the retrieved functions to the query function by applying an isometry. These alignments have been obtained by approximating the transformations in $G_3$.

\begin{figure}[htb]
\begin{minipage}[b]{\linewidth}
  \centering
  \centerline{\includegraphics[width=6.0cm]{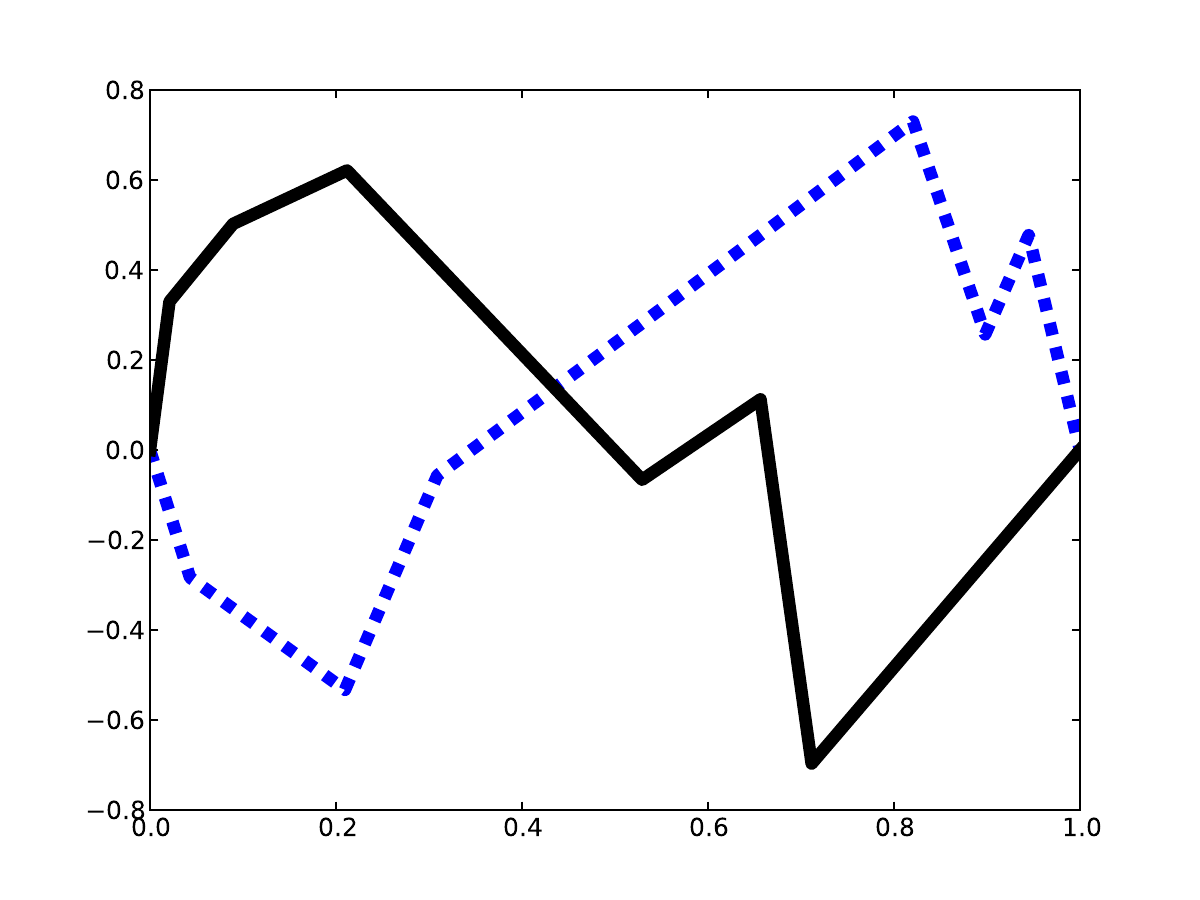}
  \includegraphics[width=6.0cm]{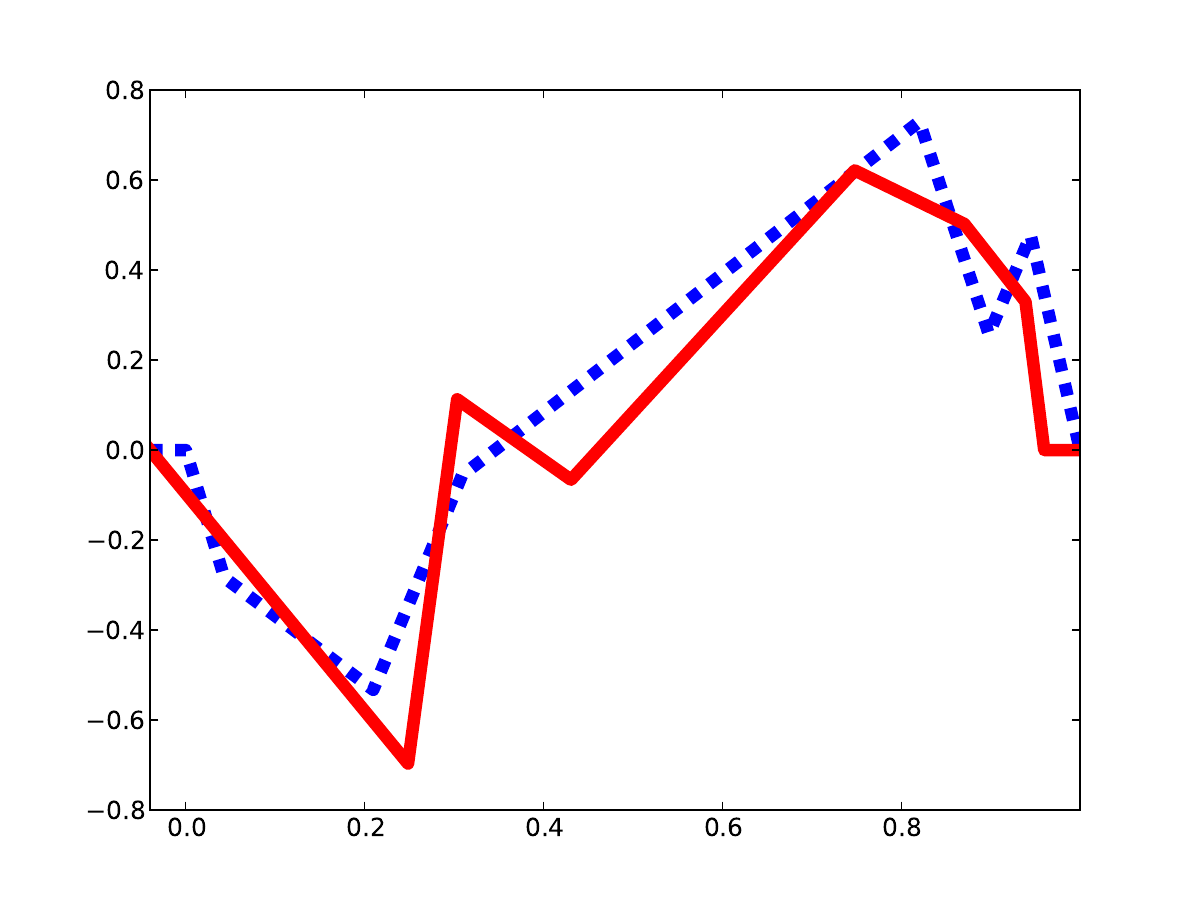}}
\medskip
   \centerline{\includegraphics[width=6.0cm]{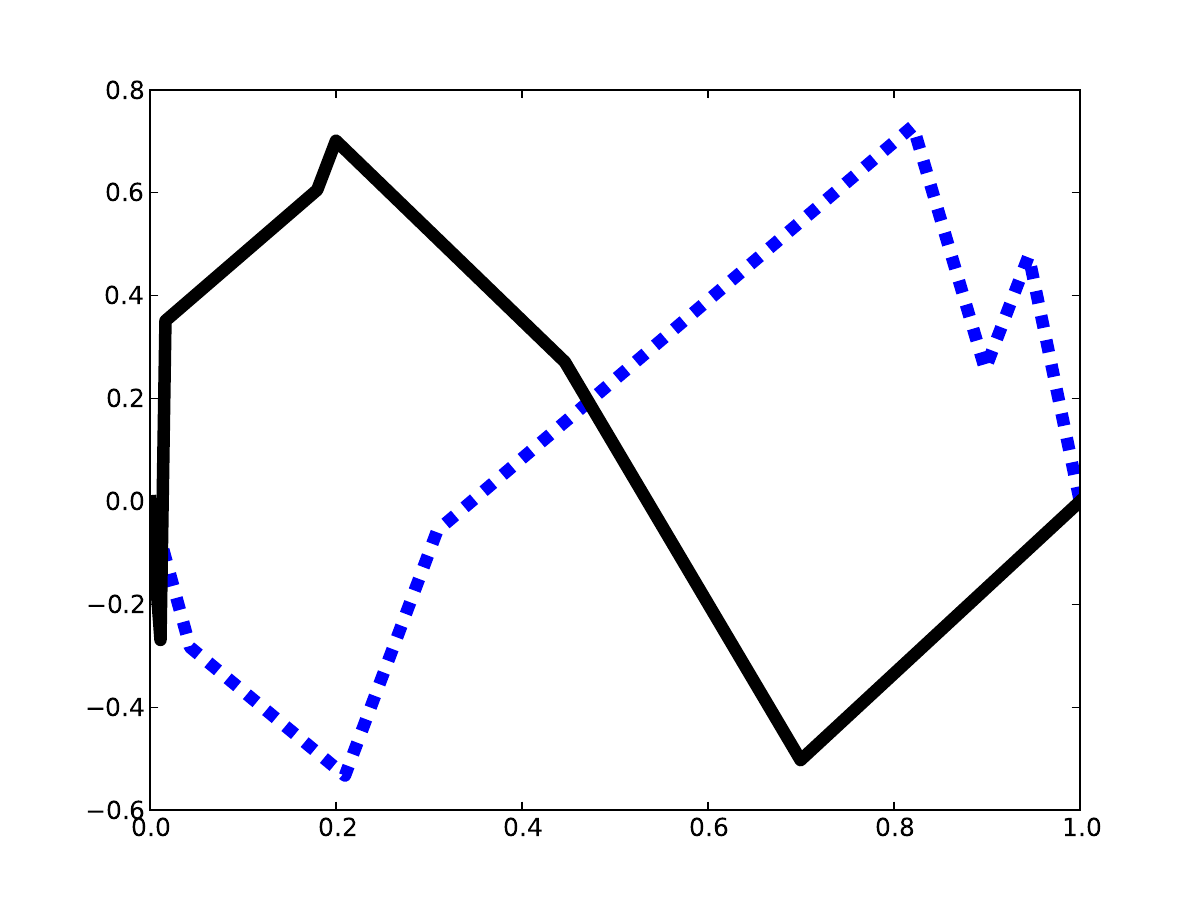}
  \includegraphics[width=6.0cm]{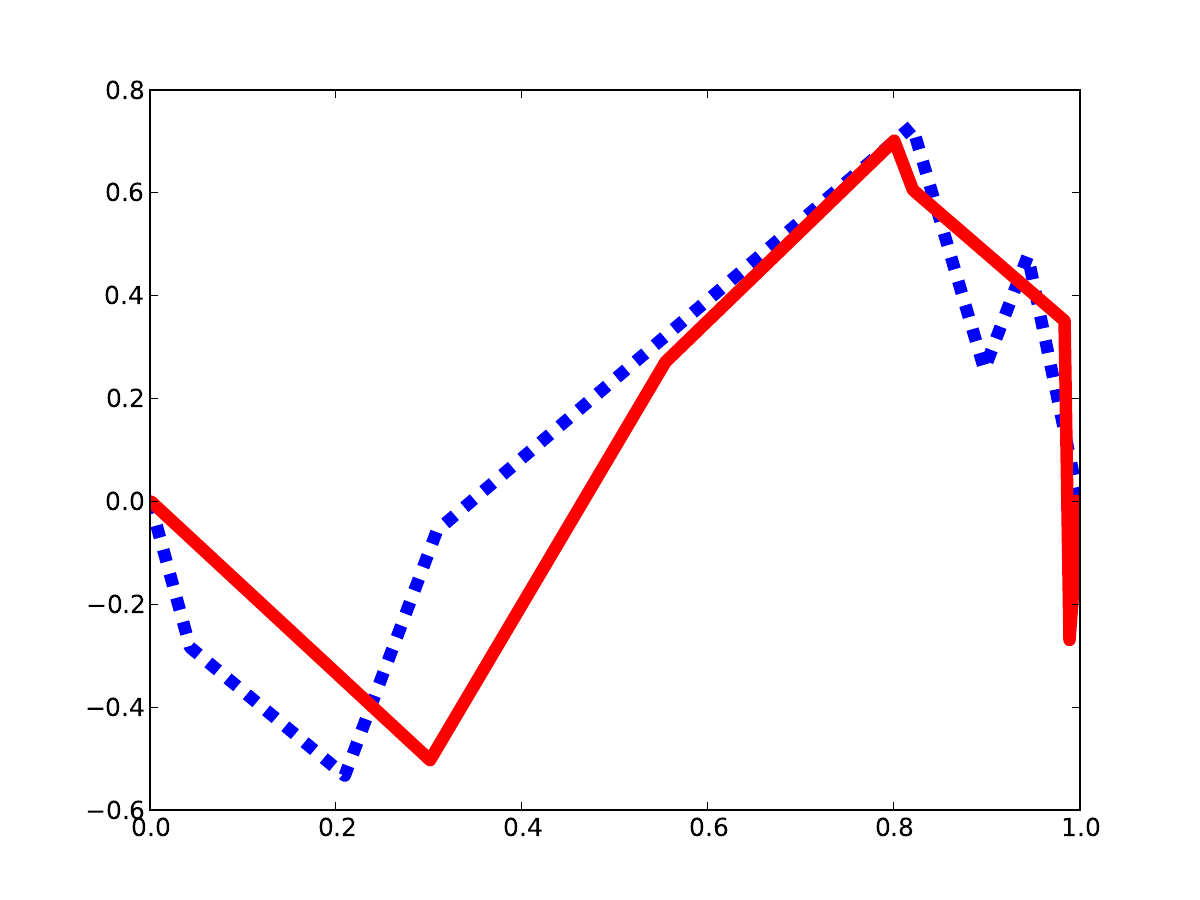}}
    \caption{Output of the experiment concerning the invariance group $G_3$. Color and type of plots are the same as in Fig.~\ref{fig:results1}.}
  \label{fig:results3}

\end{minipage}
\end{figure}


\subsection{Invariance with respect to the group of all translations of the real line}
The fourth group that we consider, denoted by $G_4$, consists of all translations of the real line (i.e. maps $x\mapsto x+b$ with $b\in\R$). 

Since $G_4\subseteq G_3$, the operators that we have used for comparison with respect to the group $G_3$ are also $G_4$-operators. As a consequence we can use them also for the invariance group $G_4$, adding the operators $F_4^a$, $F_4^b$, $F_4^c$, $F_4^d$ and $F_4^e$ defined as follows:

\begin{itemize}
\item[-] $F_4^a$, defined by setting $F_4^a(\varphi)(x) = \max\left(\p(x),\p\left(x+\frac{1}{4}\right)\right)$  for every $\p\in \Phi$ and every $x\in \R$;
\item[-] $F_4^b$, defined by setting $F_4^b(\varphi)(x) = \frac{1}{2}\left(\p\left(x-\frac{1}{4}\right)+\p(x)\right)$  for every $\p\in \Phi$ and every $x\in \R$;
\item[-] $F_4^c$, defined by setting $F_4^c(\varphi)(x) = \frac{1}{2}\left(\p(x)+\p\left(x+\frac{1}{4}\right)\right)$  for every $\p\in \Phi$ and every $x\in \R$;
\item[-] $F_4^d$, defined by setting $F_4^d(\varphi)(x) = \frac{1}{3}\left(\p(x)+\p\left(x+\frac{1}{5}\right)+\p\left(x+\frac{2}{5}\right)\right)$  for every $\p\in \Phi$ and every $x\in \R$;
\item[-] $F_4^e$, defined by setting $F_4^e(\varphi)(x) = \max\left(\p(x),\p\left(x+\frac{1}{5}\right),\p\left(x+\frac{2}{5}\right)\right)$  for every $\p\in \Phi$ and every $x\in \R$.

\end{itemize}

In Fig.~\ref{fig:results4} we show an example of retrieval in our dataset. The two functions (solid black lines) that are most similar to a given query function (dotted blue line) are displayed. The red graphs show how we can get good alignments of the retrieved functions to the query function by applying translations. These alignments have been obtained by approximating the translations in $G_4$.

\begin{figure}[htb]
\begin{minipage}[b]{\linewidth}
  \centering
  \centerline{\includegraphics[width=6.0cm]{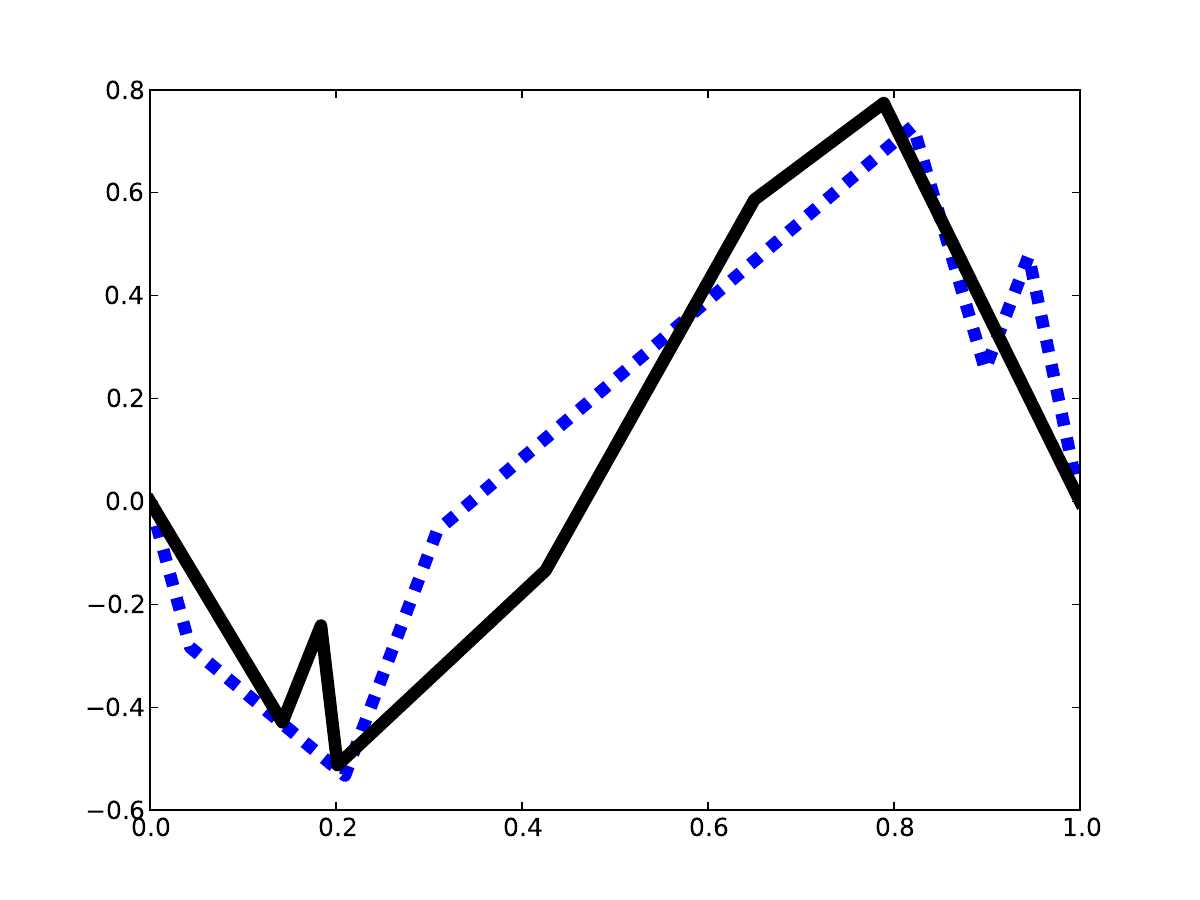}
  \includegraphics[width=6.0cm]{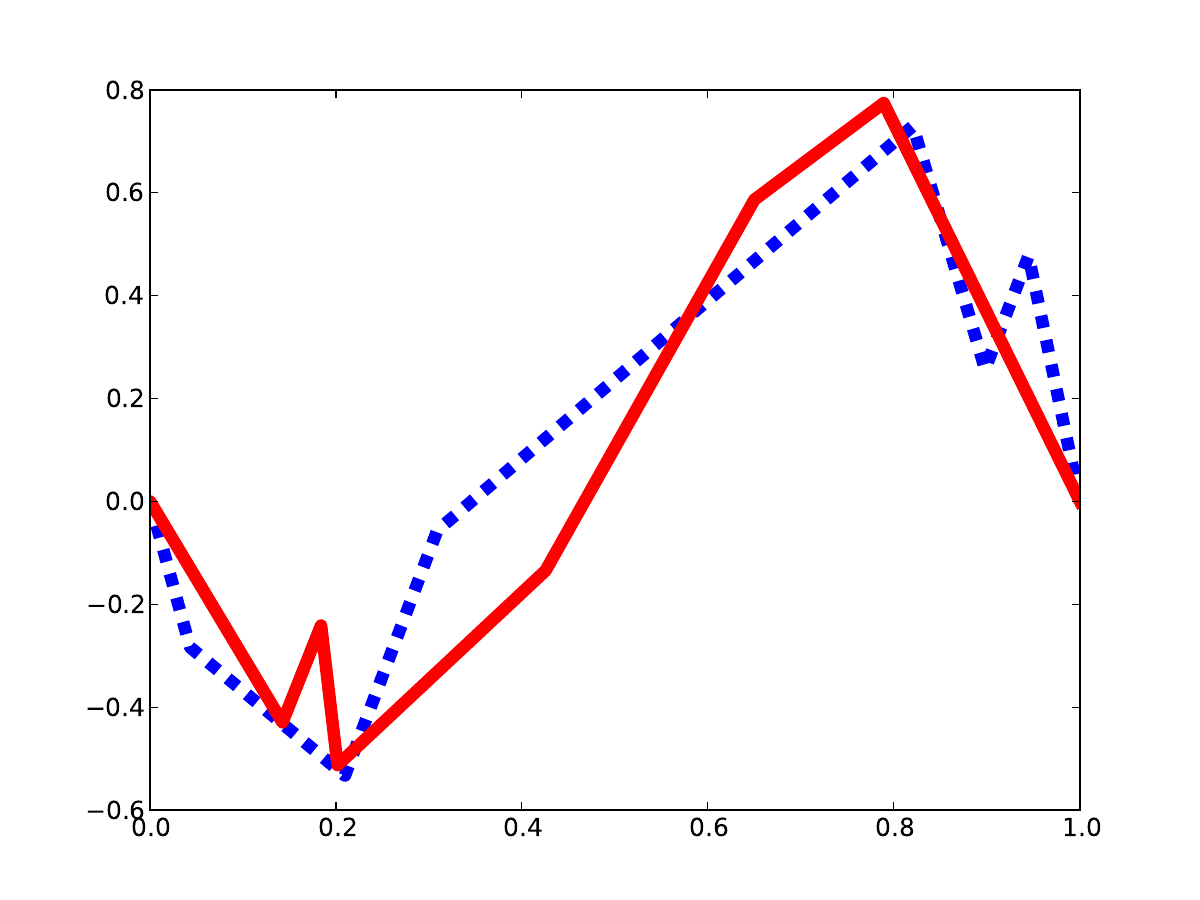}}
\medskip
   \centerline{\includegraphics[width=6.0cm]{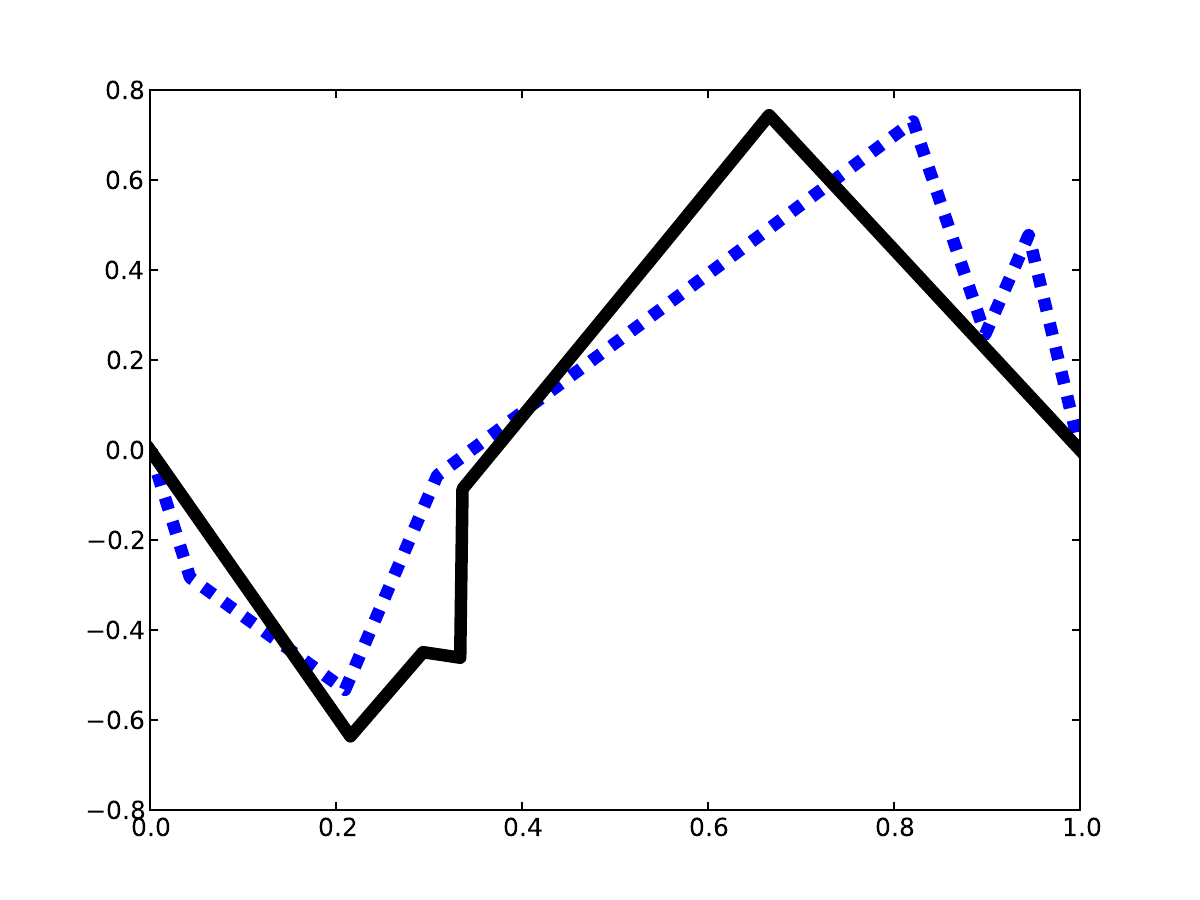}
  \includegraphics[width=6.0cm]{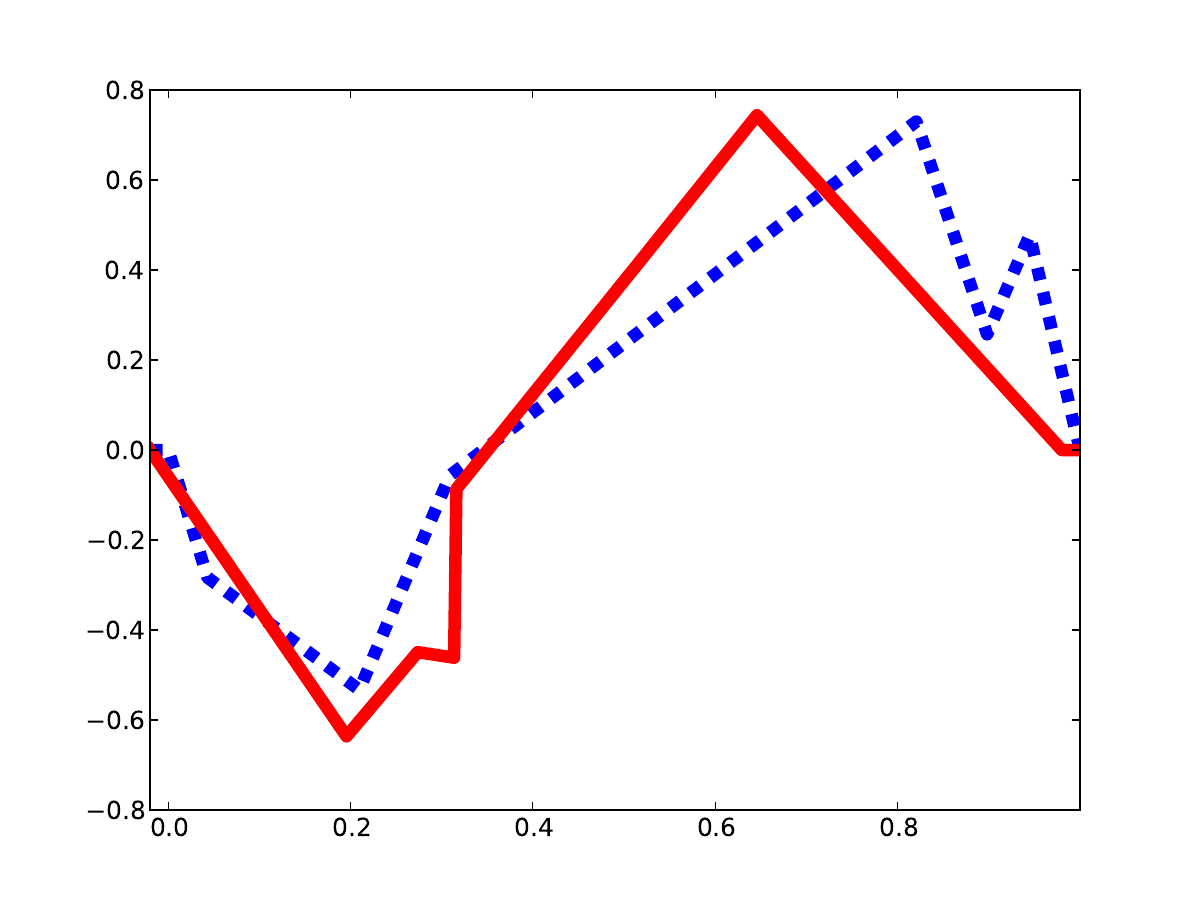}}
    \caption{Output of the experiment concerning the invariance group $G_4$. Color and type of plots are the same as in Fig.~\ref{fig:results1}.}
  \label{fig:results4}

\end{minipage}
\end{figure}


\subsection{Invariance with respect to the trivial group} 
The fifth (and last) group that we consider, denoted by $G_5$, is the trivial group $Id$ containing just the identity. We observe that the concept of $Id$-operator coincides with the concept of operator.

Since $G_5\subseteq G_4$, the operators that we have used for comparison with respect to the group $G_4$ are also $G_5$-operators. As a consequence we can use them also for the invariance group $G_5=Id$, adding the operators $F_5^a$, $F_5^b$, $F_5^c$, $F_5^d$ and $F_5^e$ defined as follows:

\begin{itemize}
\item[-] $F_5^a$, defined by setting $F_5^a(\varphi)(x) = \varphi(x)\sin(5\pi x)$ for every $\p\in \Phi$ and every $x\in \R$;
\item[-] $F_5^b$, defined by setting $F_5^b(\varphi)(x) = \varphi(x)\sin(9 \pi x)$ for every $\p\in \Phi$ and every $x\in \R$;
\item[-] $F_5^c$, defined by setting $F_5^c(\varphi)(x) = (\varphi(x)+2)\cdot g_\frac{1}{4}(x)$ for every $\p\in \Phi$ and every $x\in \R$;
\item[-] $F_5^d$, defined by setting $F_5^d(\varphi)(x) = (\varphi(x)+2))\cdot g_\frac{1}{2}(x)$ for every $\p\in \Phi$ and every $x\in \R$;
\item[-] $F_5^e$, defined by setting $F_5^e(\varphi)(x) = \frac{1}{2}(\varphi(x)+2)\cdot \left(g_\frac{3}{8}(x)+g_\frac{5}{8}(x)\right)$ for every $\p\in \Phi$ and every $x\in \R$
\end{itemize}
where $g_\mu(x)=e^{-\left(\frac{x-\mu}{0.1}\right)^2}$.


In Fig.~\ref{fig:results5} we show an example of retrieval in our dataset. The two functions (solid black lines) that are most similar to a given query function (dotted blue line) are displayed. No alignment is necessary here, since the unique allowed transformation is the identity, and $d_{G_5}$ equals the sup-norm.

From the practical point of view, the computation of $d_{G_5}$ can be done directly, without using the pseudo-distance $D^{\mathcal{F}_5^*}_{match}$ as an approximation.
However, we decided to include this last experiment for the sake of completeness, in order to show how our method behaves also in this trivial case.

\begin{figure}[htb]
\begin{minipage}[b]{\linewidth}
  \centering
  \centerline{\includegraphics[width=6.0cm]{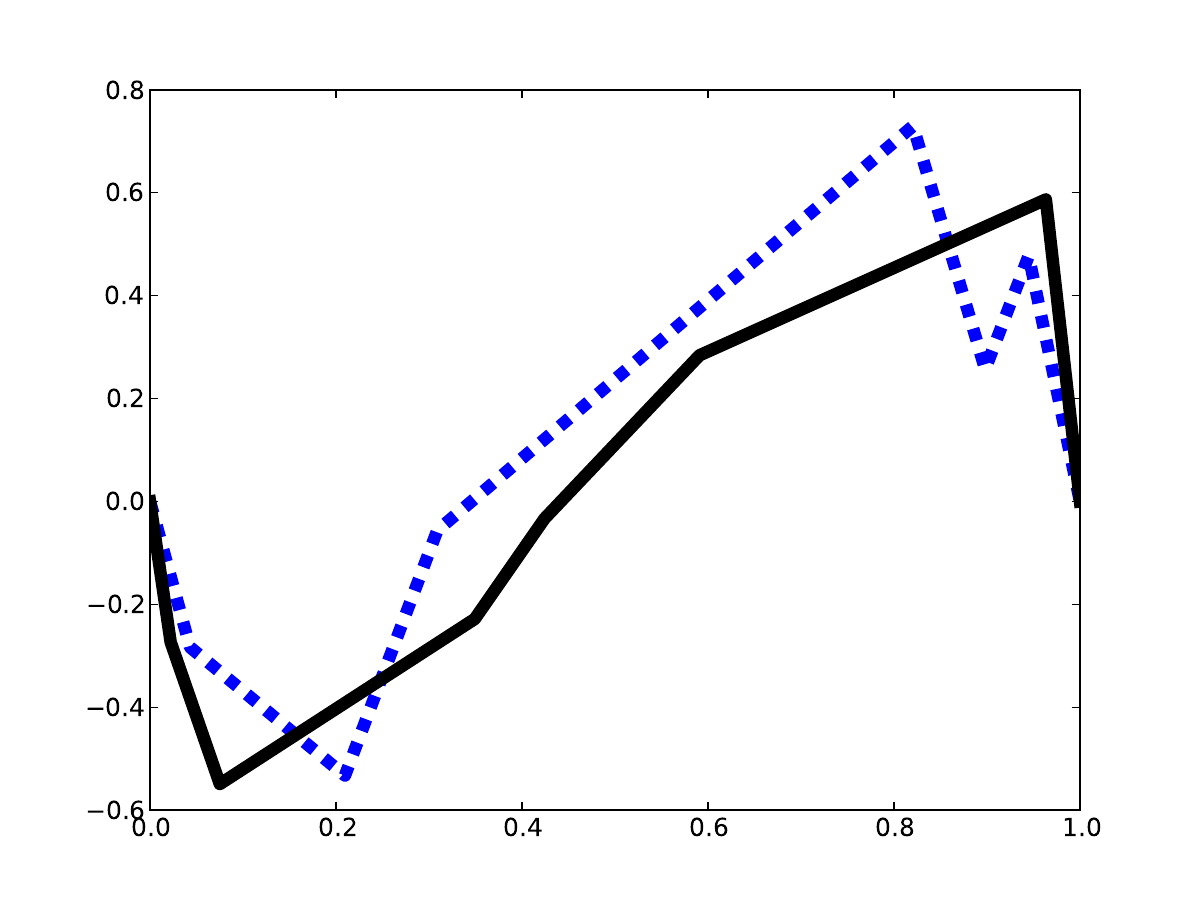}
  \includegraphics[width=6.0cm]{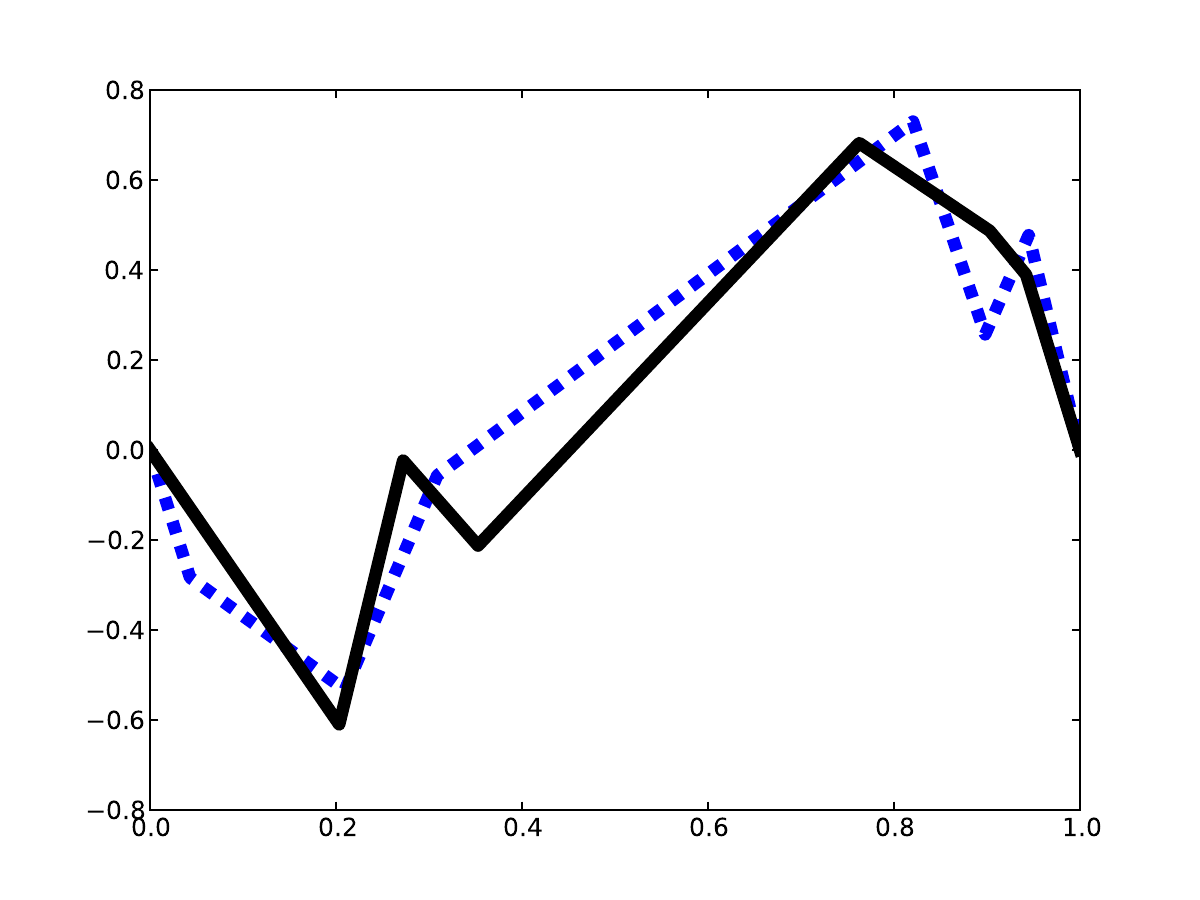}}
    \caption{Output of the experiment concerning the invariance group $G_5=Id$. The most similar and the second most similar function (solid black lines) with respect to the query function (dotted blue line) are displayed.}
  \label{fig:results5}

\end{minipage}
\end{figure}




\subsection{Quantitative results} 
\label{finaltable} 
The purpose of this section is to give a quantitative estimate of the approximation of the natural pseudo-distance $d_{G_i}$ via the pseudo-distance $D^{\mathcal{F}_i^*}_{match}$.
Due to the time-consuming nature of the computation of $d_{G_i}$, we use only part of the set $\Phi_{ds}$. 

In Table~\ref{tab:GiStats} we show the mean value of $d_{G_i}$ for $i=1,\dots,5$ and statistics of the error between $d_{G_i}$ and $D^{\mathcal{F}_i^*}_{match}$: mean absolute error (MAE) and mean relative error (MRE).

\begin{table}[h!]
\begin{center}
    \begin{tabular}{ | l | l | l | l | l |}
    \hline
    Group & Mean $d_{G_i}$ & MAE  & MRE \\ \hline
    $G_1$ & 0.71 & 0.17 & 0.25  \\ \hline
    $G_2$ & 0.75 & 0.20 & 0.27  \\ \hline
    $G_3$ & 0.80 & 0.26 & 0.32  \\ \hline
    $G_4$ & 0.83 & 0.27 & 0.32  \\ \hline
    $G_5$ & 1.25 & 0.45 & 0.35  \\ \hline
    \end{tabular}
\end{center}
\caption{Mean values for $d_{G_i}$, mean absolute error (MAE), mean relative error (MRE) between $d_{G_i}$  and $D^{\mathcal{F}_i^*}_{match}$ computed on 1000 functions from $\Phi_{ds}$ ($5\times 10^5$ pairs) in case of $G_3$, $G_4$, $G_5$ and on 100 functions ($5\times 10^3$ pairs) in case of $G_1$ and $G_2$. The reason of not using the whole set of functions was computation time for the brute force approximation.}
\label{tab:GiStats}
\end{table}


On average, the relative error results to be around 0.25-0.35, with the best results for the group $G_1$ and $G_2$. 
However, we decided not to try to get better results by enlarging the sets of operators, but to keep these sets small.

The results displayed in Table~\ref{tab:GiStats} show that a  small set of operators is sufficient to produce a relatively good approximation of the pseudo-distances $d_{G_i}$ that we have considered. The most important question and natural next step is to find heuristics or optimal methods to decide which operators bring most information.

In our opinion it is surprising that a set of just a few operators is sufficient to get a good approximation of any natural pseudo-distance $d_{G_i}$. This fact seems quite promising, since it opens the way to an alternative approach to approximate the natural pseudo-distance, besides the one based on brute-force computation. 

\section{Towards an image retrieval system}
\label{towards}

The experiment in the previous section was prepared to show quantitative results. In this section we present another experiment, whose goal is to show qualitative results. We demonstrate how our approach can be used in a simple image retrieval task, where the invariance group consists of all isometries of $\R^2$. We compute the pseudodistance $D^{\mathcal{F}^*}_{match}$ using 12 operators, and referring just to homology in degree $0$. While in the previous experiment there was no reason to use homology in degree $1$, this could be of use in the image case. Nevertheless, we do not include it to speed-up computations.

We keep the notation consistent with the one in the previous section. The dataset of objects consists of 10.000 grey-level images with three to six spots. Each spot is generated by adding a 2D bump function with randomly chosen size at a randomly chosen position, whereas images are represented as functions from $\R^2$ to the interval $[0,1]$ with support in the square $[0,1]^2$. However, in this experiment the set $\Phi$ of admissible filtering functions will consists of all continuous functions from $\R^2$ to $[-1,1]$ with support in the square $[0,1]^2$. This choice will allow us to use a wider range of operators.

\begin{figure}[h]
\label{example}
\subfigure { \includegraphics[height=0.17\textheight]{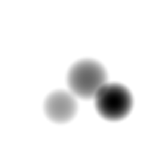}}
\subfigure { \includegraphics[height=0.17\textheight]{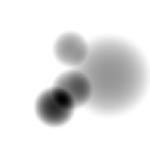}}
\subfigure { \includegraphics[height=0.17\textheight]{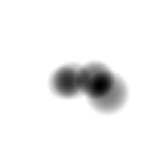}}
\subfigure { \includegraphics[height=0.17\textheight]{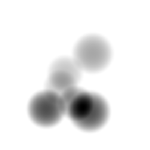}}

\caption{Examples of grey-level images from the dataset used in the second experiment.
Black and white represent the values $1$ and $0$, respectively.}
\label{fig:example2}
\end{figure}

In order to skip unnecessary technical details, we confine ourselves to give a concise description of the operators that we have used. We have chosen  12 operators, divided into two families of six. The first family consists of the following operators:
\begin{enumerate}
\item Identity operator;
\item An operator taking each  function $\p\in \Phi$ to the constant function whose value is $\int_{\R^2} \varphi({\boldsymbol\xi})\ d{\boldsymbol\xi}$;
\item Four operators based on convolution of the image $\p$ with different kernels. These operators are formally defined as follows:
$$F_\beta(\varphi)({\boldsymbol x}):=\int_{\R^2}\varphi({\boldsymbol x}-{\boldsymbol y})\cdot\beta\left(\|{\boldsymbol y}\|_2\right)\ d{\boldsymbol y}$$ 
where $\beta$ is an integrable function s.t. $\int_{\R^2}\left|\beta\left(\|{\boldsymbol y}\|_2\right)\right|\ d{\boldsymbol y}\le 1$ (here, $\|{\boldsymbol y}\|_2$ denotes the Euclidean norm of the vector ${\boldsymbol y}$). This condition is necessary in the proof of non-expansiveness, and $\beta$ can be considered as a kernel function. We used the following four kernel functions:
\begin{itemize}
\item $\beta(t) = \begin{cases} 16/\pi, & \mbox{if } 0\le t\le 1/4 \\ 0, & \mbox{if } t<0 \vee t>1/4 \end{cases}$.
\item $\beta(t) = \begin{cases} 16/\pi, & \mbox{if } 0\le t\le 1/8 \\ -16/\pi, & \mbox{if } 1/8\le t\le 1/4 \\ 0, & \mbox{if } t<0 \vee t>1/4 \end{cases}$ .
 \item $\beta(t) = \begin{cases} 
16/\pi, & \mbox{if } 0\le t\le 1/16 \\ 
-16/\pi, & \mbox{if } 1/16\le t\le 1/8 \\ 
16/\pi, & \mbox{if } 1/8\le t\le 3/16 \\ 
-16/\pi, & \mbox{if } 3/16\le t\le 1/4 \\ 
0, & \mbox{if } t<0 \vee t>1/4 
\end{cases}$.
\item $\beta(t) = \begin{cases} 4/\pi, & \mbox{if } 0\le t\le 1/4 \\ -4/\pi, & \mbox{if } 1/4\le t\le 1/2 \\ 0, & \mbox{if } t<0 \vee t>1/2 \end{cases}$.
\end{itemize}
\end{enumerate}

The second family consists of the operators that we can obtain by reversing the sign of the previous six operators. Overall, we have twelve operators. The reader can easily verify that our twelve operators are non-expansive and $G$-invariant, when $G$ is the group of all isometries of $\R^2$.

We look for the most similar images to three images from the dataset. In each of the Figures \ref{exA}, \ref{exB} and \ref{exC} we present the three images that have the smallest computed pseudo-distance with respect to a query image.

\begin{rem}\label{remIMAGES}
Due to the randomness of the method we used to construct our images, it is quite unlikely that our dataset contains two identical (or even nearly identical) images, especially when the number of bumps that appear in the images is $5$ or $6$. We would like to underline that our goal is not to find images that are equal to each other, but images that resemble each other with respect to the group of isometries. 
\end{rem}

\begin{rem}\label{remMEANING}
We intentionally decided to build our dataset by producing images that do not encode meaningful information for humans. Otherwise, the qualitative results would be biased by a priori knowledge of the image content. 
In our research we focus on topological and geometrical properties, and neglect the perceptual aspects of image comparison. 
Therefore, we decided not to use standard datasets from image comparison and retrieval projects.
\end{rem}

\begin{figure}[h]
\subfigure[Primary image]{\label{figA1}\includegraphics[height=0.17\textheight]{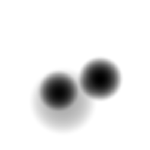}}
\subfigure[Result 1]{\label{figA2}\includegraphics[height=0.17\textheight]{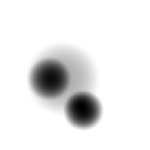}}
\subfigure[Result 2]{\label{figA3}\includegraphics[height=0.17\textheight]{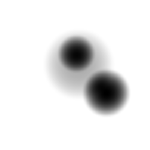}}
\subfigure[Result 3]{\label{figA4}\includegraphics[height=0.17\textheight]{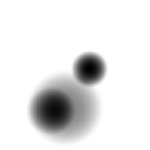}}

\caption{Most similar images to the query image from a dataset of 2D artificial images. Computed pseudo-distances $D^{\mathcal{F}^*}_{match}$ are respectively 0.035, 0.042 and 0.050 for images~\ref{figA2},~\ref{figA3} and~\ref{figA4}. The second image can be approximately obtained from the first one via reflection and rotation, which are both isometries. The third and the fourth images require only rotation.}\label{exA}
\end{figure}

\begin{figure}[h]
\subfigure[Primary image]{\label{figB1}\includegraphics[height=0.17\textheight]{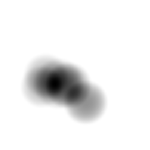}}
\subfigure[Result 1]{\label{figB2}\includegraphics[height=0.17\textheight]{9428.pdf}}
\subfigure[Result 2]{\label{figB3}\includegraphics[height=0.17\textheight]{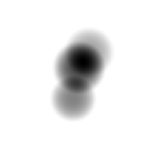}}
\subfigure[Result 3]{\label{figB4}\includegraphics[height=0.17\textheight]{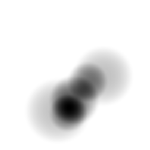}}

\caption{Most similar images to the query image from a dataset of 2D artificial images. Computed pseudo-distances $D^{\mathcal{F}^*}_{match}$ are respectively 0.039, 0.046 and 0.050 for images~\ref{figB2},~\ref{figB3} and~\ref{figB4}.   }\label{exB}
\end{figure}

\begin{figure}[h]
\subfigure[Primary image]{\label{figC1}\includegraphics[height=0.17\textheight]{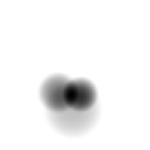}}
\subfigure[Result 1]{\label{figC2}\includegraphics[height=0.17\textheight]{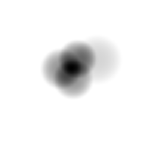}}
\subfigure[Result 2]{\label{figC3}\includegraphics[height=0.17\textheight]{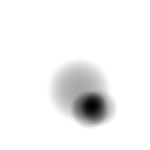}}
\subfigure[Result 3]{\label{figC4}\includegraphics[height=0.17\textheight]{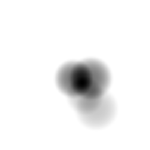}}

\caption{Most similar images to the query image from a dataset of 2D artificial images. Computed pseudo-distances $D^{\mathcal{F}^*}_{match}$ are respectively 0.011, 0.015 and 0.019 for images~\ref{figC2},~\ref{figC3} and~\ref{figC4}.}\label{exC}
\end{figure}


\section*{Discussion and future work}\label{discussion}

In our paper we have described a method to combine persistent homology and the invariance with respect to a given group $G$ of homeomorphisms, acting on a set $\Phi$ of filtering functions. This technique allows us to treat $G$ as a variable in our problem, and to distinguish functions that are not directly distinguishable in the classical setting. Our approach is based on a new pseudo-distance depending on a set of non-expansive $G$-invariant operators, that approximates the natural pseudo-distance in the limit. 

Some relevant questions remain open:
\begin{itemize}
\item How can we choose the $G$-invariant operators in order to get the best possible results, depending on the set $\Phi$ and the group $G$? How large should the set of operators be? Is it possible to build a dictionary of operators to be used for a specific group?
\item How could our theoretical results be applied to problems in shape comparison? 
\item How could our approach benefit from the use of multidimensional persistence?
\end{itemize}

Our first experiments suggest that the introduced method is pretty robust, taking advantage from the stability of persistent homology. We hope that this property can open the way to new applications of the concept of persistence, in presence of constraints concerning the invariance of our data.
\medskip

In conclusion, we would also like to consider the problem of formalizing the framework that we have described in this paper.
In our approach, each object belonging to a given dataset is seen as a collection $\{\p_i:X_i\to\R\}$ of continuous functions.
These functions represent the measurements made on the object. No attempt is made to define the objects in a direct way, according to the idea that each object is accessible just via acts of measurement (cf. \cite{BiDFFa08}). 

However, the measurements $\{\p_i:X_i\to\R\}$ are not directly used by the observer that has to judge about similarity and dissimilarity. 
Indeed, perception usually changes the signals $\{\p_i:X_i\to\R\}$ into several new (and usually simpler) collections $\{\psi^j_i:X_i\to\R\}$ of data.  This passage is given by some operators $F_j$, taking each function $\p_i\in\Phi_j$ into a new function $\psi^j_i$. In the approach that we have presented, these operators are supposed to be $G$-invariant and non-expansive, because perception usually benefits of some invariance and quantitative constraint. In other words, the observer is represented by an ordered family 
$\{F_j:\Phi_j\to\Phi_j\}$ of suitable operators, each one acting on a set $\Phi_j$ of admissible signals.
As a consequence, two objects in the dataset can be distinguished if and only if the observer is endowed with
an operator $F$, changing the corresponding signals into two new signals that are not equivalent with respect to the invariance group. 

We think that this approach could benefit of a precise categorical formalization, and we plan to devote our research to this topic in the future.



\section*{Acknowledgment}
The authors thank Marian Mrozek for his suggestions and advice. The research described in this article has been partially supported by GNSAGA-INdAM (Italy), and is based on the work realized by the authors within the ESF-PESC Networking Programme ``Applied and Computational Algebraic Topology''. The second author is supported by National Science Centre (Poland) DEC-2013/09/N/ST6/02995 grant.

\bibliographystyle{model1-num-names}






\newpage
\appendix
\section{Remark}
\label{appPHG}
If $X$ and $Y$ are two homeomorphic spaces and $h:Y\to X$ is a homeomorphism, then the persistent homology
group with respect to the function $\p:X\to\R$ and the persistent homology
group with respect to the function $\p\circ h:Y\to\R$ are isomorphic at each point $(u,v)$ in the domain. The isomorphism between the two persistent homology groups is the one taking each homology class $[c=\sum_{i=1}^r a_i\cdot\sigma_i]\in PH_k^\p(u,v)$ to the homology class $[c'=\sum_{i=1}^r a_i\cdot(h^{-1}\circ \sigma_i)]\in PH_k^{\p\circ h}(u,v)$, where each $\sigma_i$ is a singular simplex involved in the representation of the cycle $c$.
 
\section{Proof of Proposition~\ref{disadistance} }
\begin{proof}\label{disadistanceAPP}
\begin{enumerate}
\item The value $d_{\mathcal{F}}(F_1,F_2)$ is finite for every $F_1,F_2\in \mathcal{F}$, because ${\Phi}$ is bounded. Indeed, a finite constant $L$ exists such that $d_\infty(\p,\mathbf{0}):=\|\p\|_\infty\le L$ for every $\p\in\Phi$. Hence $\|F_1(\p)-F_2(\p)\|_\infty\le \|F_1(\p)\|_\infty+\|F_2(\p)\|_\infty\le 2L$ for any $\varphi\in \Phi$ and any $F_1,F_2\in \mathcal{F}$, since $F_1(\p), F_2(\p)\in \Phi$. This implies that $d_{\mathcal{F}}(F_1,F_2)\le 2L<\infty$ for every $F_1,F_2\in \mathcal{F}$.
\item $d_{\mathcal{F}}$ is obviously symmetrical.
\item The triangle inequality holds, since 
\begin{equation*}
\begin{split}
& d_{\mathcal{F}}(F_1,F_2):=\sup_{\p\in {\Phi}}\|F_1(\p)-F_2(\p)\|_\infty\le  \\
& \sup_{\p\in {\Phi}}\left(\|F_1(\p)-F_3(\p)\|_\infty+\|F_3(\p)-F_2(\p)\|_\infty\right)\le   \\
& \sup_{\p\in {\Phi}}\|F_1(\p)-F_3(\p)\|_\infty+\sup_{\p\in {\Phi}}\|F_3(\p)-F_2(\p)\|_\infty= \\
& d_{\mathcal{F}}(F_1,F_3)+d_{\mathcal{F}}(F_3,F_2)
\end{split}
\end{equation*}
for any $F_1,F_2,F_3\in \mathcal{F}$.
\item The definition of $d_{\mathcal{F}}$ immediately implies that $d_{\mathcal{F}}(F,F)=0$ for any $F\in \mathcal{F}$.
\item If $d_{\mathcal{F}}(F_1,F_2)=0$, then the definition of $d_{\mathcal{F}}$ implies that $\|F_1(\p)-F_2(\p)\|_\infty=0$ for every $\p\in {\Phi}$, and hence $F_1(\p)=F_2(\p)$ for every $\p\in {\Phi}$. 
Therefore $F_1\equiv F_2$.
\end{enumerate}
\end{proof}
\end{document}